\numberwithin{equation}{section}
\DeclareSymbolFont{cyrletters}{OT2}{wncyr}{m}{n}
\DeclareMathSymbol{\Sha}{\mathalpha}{cyrletters}{"58}
\theoremstyle{plain}
\newtheorem{prop}{Proposition}
\newtheorem{theo}[prop]{Theorem}
\newtheorem{lemm}[prop]{Lemma}
\theoremstyle{definition}
\newtheorem{rema}[prop]{Remark}
\newtheorem{exam}[prop]{Example}
\newcommand{\A}{{\mathbb A}}
\newcommand{\Z}{{\mathbb Z}}
\newcommand{\G}{{\mathbb G}}
\newcommand{\PP}{{\mathbb P}}
\newcommand{\bA}{\mathbb A}
\newcommand{\bC}{\mathbb C}
\newcommand{\bF}{\mathbb F}
\newcommand{\bP}{\mathbb P}
\newcommand{\bZ}{\mathbb Z}
\newcommand{\cD}{\mathcal D}
\newcommand{\cM}{\mathcal M}
\newcommand{\cO}{{\mathcal O}}
\newcommand{\cS}{\mathcal S}
\newcommand{\cV}{\mathcal V}
\newcommand{\cX}{\mathcal X}
\newcommand{\fS}{\mathfrak S}
\DeclareMathOperator{\Spec}{Spec}
\DeclareMathOperator{\Pic}{Pic}
\DeclareMathOperator{\Br}{Br}
\newcommand{\IIJ}{\operatorname{IJ}}
\newcommand{\NS}{\operatorname{NS}}
\newcommand{\ra}{\rightarrow}
\author{Brendan Hassett}
\address{Department of Mathematics\\
Brown University \\
151 Thayer Street \\
Providence, Rhode Island 02912 \\
USA}
\email{bhassett@math.brown.edu}
\author{Andrew Kresch}
\address{Institut f\"ur Mathematik \\
Universit\"at Z\"urich \\
Winterthurerstrasse 190 \\
CH-8057 Z\"urich \\
Switzerland}
\email{andrew.kresch@math.uzh.ch}
\author{Yuri Tschinkel}
\address{Courant Institute\\
                New York University \\
                New York, NY 10012 \\
                USA }
\email{tschinkel@cims.nyu.edu}
\address{Simons Foundation\\
160 Fifth Avenue\\
New York, NY 10010\\
USA}
\title{Stable rationality in smooth families of threefolds}
\begin{document}

\begin{abstract}
We exhibit families of smooth projective threefolds with both
stably rational and non stably rational fibers.
\end{abstract}

\date{February 16, 2018}

\maketitle

\section{Introduction}
Rationality, and thus as well stable rationality, is a deformation invariant property of
smooth complex projective curves and surfaces.
We now know that rationality and stable rationality specialize in
families of 
smooth complex
projective varieties of arbitrary dimension \cite{NS}, \cite{KT}.
In dimension at least four, there exist families of smooth complex projective
varieties with both rational and 
non stably rational fibers \cite{HPT}, \cite{HPT-quadric}, \cite{schreieder-1}, \cite{schreieder-2}.
The case of relative dimension three is open.  

In this note, we exhibit a family of smooth complex threefolds with
both stably rational and non stably rational fibers.

\begin{theo}
\label{theo:headline}
There exists a smooth projective family $\psi:\cV \ra B$ of complex 
threefolds over a connected curve $B$, such that for some 
$b_0\in B$ the fiber $\cV_{b_0}:=\psi^{-1}(b_0)$ 
is stably rational and the very general fiber $\cV_b:=\psi^{-1}(b)$ 
is not stably rational. In particular, 
stable rationality is not a deformation invariant of smooth
complex projective threefolds. 
\end{theo}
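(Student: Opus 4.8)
\emph{Proof plan.} The strategy is to exhibit both fibres inside one connected parameter space of smooth conic bundles over $\mathbb{P}^2$, exploiting the fact that --- in contrast to, say, quartic double solids --- rationality of such a conic bundle is sensitive to the discriminant data. Fix a rank-$3$ vector bundle $\cE$ and a line bundle $L$ on $\mathbb{P}^2$ and let $S=\mathbb{P}\bigl(H^0(\mathbb{P}^2,\operatorname{Sym}^2\cE^\vee\otimes L)\bigr)$, a projective space whose points $q$ are conic bundles $\pi_q\colon X_q\ra\mathbb{P}^2$ with discriminant a plane curve $\Delta_q$ of a fixed degree $d$; let $\cM\subset S$ be the open, hence irreducible, locus where $X_q$ is a smooth threefold, carrying a universal smooth family $\cX\ra\cM$. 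We take $d$ large enough (for instance $d\geq 6$) that the obstruction below is available.

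\emph{Step 1: the very general fibre is not stably rational.} I would run the degeneration method of Voisin and of Colliot-Th\'el\`ene--Pirutka. One exhibits a single conic bundle $X_0$ over $\mathbb{P}^2$, lying in the closure of $\cM$, with singularities mild enough that a resolution $\rho\colon\widetilde X_0\ra X_0$ is a universally $\CH_0$-trivial morphism, and such that $\widetilde X_0$ carries a nonzero unramified class --- for example $H^2_{\mathrm{nr}}(\mathbb{C}(X_0),\mathbb{Z}/2)\neq 0$ of Artin--Mumford type, obtained by letting $\Delta_0$ acquire an appropriate singularity equipped with an appropriate \'etale double cover of its normalization. The specialization theorem then forces a very general member of $\cM$ to have Chow group of zero-cycles that is not universally trivial, hence to be not retract rational and in particular not stably rational. (One may instead invoke the motivic degeneration criterion of Nicaise--Shinder \cite{NS}, which tolerates a more singular $X_0$; or, if available, one simply cites a direct proof that the very general conic bundle of this numerical type is not stably rational.)

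\emph{Step 2: a stably rational fibre, and the family.} I would exhibit an explicit $q_0\in\cM$ with $X_{q_0}$ rational: taking the fibrewise quadratic form to be $xz-f\,y^2$ for a sufficiently general form $f$ of degree $d$ on $\mathbb{P}^2$ produces a smooth conic bundle with discriminant $\{f=0\}$ and a global section $[1:0:0]$, so that $X_{q_0}$ is birational to $\mathbb{P}^2\times\mathbb{P}^1$ and hence rational, while its discriminant has the same degree $d$ as the members treated in Step~1. Since $\cM$ is irreducible, indeed rational, choose a smooth connected curve $B$ together with a morphism $B\ra\cM$ passing through $q_0$ and otherwise very general, and set $\psi\colon\cV:=\cX\times_{\cM}B\ra B$. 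Then $\psi$ is a smooth projective family of threefolds, its fibre $\cV_{b_0}$ over the point $b_0$ above $q_0$ equals $X_{q_0}$ and is stably rational, while its very general fibre is a very general member of $\cM$ and so is not stably rational by Step~1; this is the assertion of Theorem~\ref{theo:headline}.

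The main obstacle is the interaction of Steps~1 and~2: one must accommodate both an Artin--Mumford--type degenerate conic bundle $X_0$ and a rational conic bundle $X_{q_0}$ within the \emph{same} connected family of smooth threefolds, and one must verify that $X_0$ is mildly enough singular that the resolution $\rho$ is universally $\CH_0$-trivial, so that the Colliot-Th\'el\`ene--Pirutka criterion genuinely applies. Calibrating the degree $d$ --- large enough to support a nonzero unramified obstruction on $\widetilde X_0$, yet compatible with the existence of a rational member --- and checking these hypotheses on an explicit resolution is where the substantive work lies; the irreducibility of $\cM$ and the final curve-cutting are then formal.
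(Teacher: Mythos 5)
There is a fatal obstruction to your plan, and it is precisely the difficulty this paper is designed to circumvent. Restrict your family $\cM$ to the (nonempty, open, hence connected) locus $\cM^\circ$ where the discriminant curve $\Delta_q\subset\bP^2$ is smooth and irreducible. A smooth conic bundle $X_q\to\bP^2$ with smooth discriminant determines an \'etale double cover $\widetilde\Delta_q\to\Delta_q$ (parametrizing components of the degenerate fibers), i.e.\ a point of $\Pic^0(\Delta_q)[2]$; as $q$ varies this is a section of a finite \'etale group scheme over $\cM^\circ$, so the locus where it vanishes is open and closed, hence is either empty or all of $\cM^\circ$. Your member $q_0$ (the form $xz-f\,y^2$) has \emph{split} cover --- the two lines $x=0$ and $z=0$ of each degenerate fiber are globally distinguished --- so \emph{every} $q\in\cM^\circ$ has split cover. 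A split cover means all residues of the $2$-torsion Brauer class of the generic fiber vanish; since $\Br(\bP^2)=0$, the generic fiber of $X_q\to\bP^2$ is then a trivial conic over $\bC(\bP^2)$ and $X_q$ is rational. Hence the very general member of your $\cM$ is rational, and no degeneration argument (Voisin, Colliot-Th\'el\`ene--Pirutka, or Nicaise--Shinder) can show it is not stably rational: your Step~1 is not merely delicate but false for any irreducible family containing $q_0$. The ``interaction of Steps~1 and~2'' you flag at the end is therefore not a calibration issue; a connected family of smooth conic bundles cannot contain both a member with a rational section and a member with nontrivial Brauer class over the smooth-discriminant locus.

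This is exactly why the paper takes a different route: its stably rational fiber is a conic bundle with \emph{nontrivial} ramification data --- an admissible double cover of a nodal discriminant $D=C\cup L$, ramified at the nodes --- whose stable rationality comes not from a section but from the Ch\^atelet-surface phenomenon (Theorems~\ref{theo:0} and~\ref{theo:KST}): the composition $X_0\to S_0\to\bP^1$ has generic fiber a stably rational, non-rational quartic del Pezzo surface over $\bC(t)$, thanks to an $\fS_3$-Galois structure on the degenerate fibers. The special fiber is thus irrational with nonzero Brauer class, which is compatible with deforming to non-stably-rational conic bundles with smooth discriminant. The genuinely hard part, of which your sketch has no analogue, is then Section~\ref{sect:DCB}: since ramification data determine a standard conic bundle only up to birational equivalence, one must show (Theorem~\ref{thm.defo}) that the conic bundle attached to the nodal data and those attached to its smooth deformations can be fitted into a single smooth projective family over $B$.
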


Our examples originate from the first examples of
non-rational but stably rational varieties \cite{BCTSSD}.
The key ingredient is a class of smooth projective surfaces over
non-closed fields $k$ that are stably rational but not rational
(see Section~\ref{sect:recall}).
For $k=\mathbb C(t)$, we obtain fibered threefolds 
$Y \ra \bP^1$ that are stably rational over $\bC$.
Using the technique of intermediate Jacobians, one can
show that some of these fibered threefolds are irrational.

We work in a similar vein, considering threefolds fibered in
stably rational surfaces. Instead of the intermediate
Jacobian, we employ the 
groundbreaking work on specialization of 
stable rationality by Voisin \cite{voisin}, and its subsequent developments in 
\cite{CT-Pirutka}, \cite{totaro}, \cite{HKT}, \cite{HT-Fano}.

Here is a more detailed summary of the contents of this
paper: 
We review the key class of stably rational non-rational surfaces in
Section~\ref{sect:recall} and recast their Galois-theoretic properties,
when defined over $\bC(t)$,
in terms of finite covers of nodal curves in 
Section~\ref{sect:geom}. Section~\ref{sect:construct} sketches the
construction of the families of threefolds.
The analysis of Section~\ref{sect:DCB} may be
of independent interest: How can we construct families of standard
conic bundles over a prescribed family of ramification data? 
Section~\ref{sect:proof} establishes the failure of stable rationality
for general deformations of the examples constructed previously.
Finally, we explain why these tools fail to yield stably rational
cubic threefolds in Section~\ref{sect:limit}.

\

\noindent
{\bf Acknowledgments:}  The first author was partially supported by NSF grants
1551514 and 1701659 and the
third author by NSF grant 1601912. We are grateful to Dan Abramovich and
Jean-Louis Colliot-Th\'el\`ene for discussions about this work.

\section{Recollections on stably rational non-rational surfaces}
\label{sect:recall}

Let $k$ be a field of characteristic zero with absolute Galois group $G_k$.
For us, a {\em Ch\^atelet surface} is 
\begin{equation} \label{eqn:1}
V=\{ (x,y,z) \, \mid \,  y^2-az^2 = f(x) \} \subset \bA^3,
\end{equation}
where $f\in k[x]$ is a cubic polynomial with Galois group
the symmetric group  $\fS_3$ and $a=\mathrm{disc}(f)$.
In particular, $a$ is not a square in $k$.
These exist whenever $k$ admits extensions with Galois
group $\fS_3$.

Let $F(x,w) \in k[x,w]$ be a homogeneous quartic form with
$F(x,1)=f(x)$; note that $w\mid F(x,w)$. The compactification
$$\hat{V} = \{(w:x:y:z) \, \mid \,  y^2-az^2 = F(x,w) \} \subset
\bP(1,1,2,2)$$
has two ordinary singularities $(0:0:\pm \sqrt{a}:1)$.
This admits a natural embedding as a complete intersection
of two quadrics in $\bP^4$. Writing
$$u_0=x^2,\quad u_1=xw,\quad u_2=w^2,$$
we may express
$$\bP(1,1,2,2)= \{u_1^2=u_0u_2 \} \subset \bP^4_{u_0,u_1,u_2,y,z},$$
so that $\hat{V} \subset \bP(1,1,2,2)$ is cut out by a quadratic form.

\begin{theo}\cite{BCTSSD}, \cite{ShB}
\label{theo:0}
The Ch\^atelet surface $V$ is stably rational but not rational over $k$. 
\end{theo}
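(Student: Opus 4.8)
The plan is to establish the two assertions separately: first that $V$ (equivalently $\hat V$) is \emph{not} $k$-rational, and then that it \emph{is} stably $k$-rational.

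\textbf{Non-rationality.} I would work with the smooth minimal model of $\hat V$. Because $f$ has Galois group $\fS_3$ and $a=\mathrm{disc}(f)$, resolving the two singular points of $\hat V$ yields a smooth projective surface $X$ admitting a conic bundle structure $\pi:X\to\bP^1$ with exactly four degenerate fibers, lying over the point at infinity together with the three roots of $f$ (after accounting for the homogenization). The geometry is governed by the splitting field of $f$: over $\bar k$ the conic bundle is the blow-up of a quadric surface, but over $k$ the $\fS_3$-action permutes the components of the degenerate fibers nontrivially. The standard tool here is the Artin--Mumford / Colliot-Th\'el\`ene--Ojanguren obstruction, or equivalently an explicit computation of the unramified Brauer group $\Br_{\mathrm{nr}}(k(X)/k)$, or a direct argument with quadratic forms over $k(x)$: the quaternion algebra $(a, f(x))$ over $k(x)$ is nontrivial and unramified on $X$, hence $X$ (and thus $V$) is not $k$-rational. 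Alternatively one invokes the Iskovskikh-type criterion for conic bundles over surfaces; but over $\bP^1$ the cleanest route is the Brauer-class computation, which is exactly the argument of \cite{BCTSSD}.

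\textbf{Stable rationality.} For this I would produce an explicit birational equivalence between $V\times\bP^N$ and $\bP^{N+2}$ for suitable $N$, or rather realize $V$ as a direct factor of a rational variety. The classical approach exploits the complete-intersection-of-two-quadrics model inside $\bP^4$: a smooth intersection of two quadrics containing a conic, or more relevantly the presence of the line/conic structure coming from $w\mid F(x,w)$, lets one run a projection argument. Concretely, $V$ is birational over $k$ to the variety defined by $y^2-az^2=f(x)$; multiplying through and viewing this as a quadric in $(y,z)$ over $k(x)$, the surface $V$ becomes a Severi--Brauer-type conic bundle, and the product of $V$ with another such surface (or with the norm-one torus of $k(\sqrt a)/k$) trivializes the obstruction. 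The key algebraic input is that in the Witt group, or at the level of Brauer classes over $k(x)$, two copies of the relevant quaternion symbol cancel, which translates geometrically into $V\times V$ (or $V\times W$ for an auxiliary conic bundle $W$) being $k$-rational; combined with a standard "no-name" or Lindel-type lemma this yields stable rationality of $V$ itself. This is precisely the mechanism in \cite{BCTSSD} and the alternative treatment in \cite{ShB}.

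\textbf{Main obstacle.} The delicate point is the non-rationality half: one must verify that the candidate Brauer class $(a,f(x))\in\Br(k(X))$ is genuinely \emph{unramified} along every divisor of $X$ — this requires checking residues at the components of the degenerate conic-bundle fibers and using that $a=\mathrm{disc}(f)$ precisely so that the ramification of $(a,f)$ along the root-fibers is cancelled by the quadratic extension $k(\sqrt a)$ splitting there — and then that it is nonzero, which uses the $\fS_3$ (not merely $\Z/2$) Galois hypothesis on $f$. Since \cite{BCTSSD} and \cite{ShB} carry this out in full, I would cite them for the complete verification rather than reproduce the residue computations here.
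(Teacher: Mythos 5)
The paper gives no proof of this statement---it is quoted directly from \cite{BCTSSD} and \cite{ShB}---so your proposal has to stand on its own. Your sketch of the stable rationality half is vague but gestures at the correct mechanism (one shows that the product of $V$ with an auxiliary $k$-rational variety, in \cite{BCTSSD} a torsor under a rational torus built from the \'etale algebra $k[x]/(f)$ and the quadratic extension $k(\sqrt a)$, is $k$-rational), and deferring the details to the references is reasonable.

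The non-rationality half, however, is built on an argument that provably cannot work. First, the quaternion class $(a,f(x))\in\Br(k(x))$ is precisely the class of the generic fiber of the conic bundle $X\to\bP^1$; its pullback to $\Br(k(X))$ is \emph{zero}, since the generic fiber tautologically acquires a point over the function field of the total space, so there is no candidate class on $X$ to test for being unramified. Second, and more fundamentally, $\Br(X)/\Br(k)$ --- like $H^1(k,\Pic(\bar X))$ and every Artin--Mumford or Colliot-Th\'el\`ene--Ojanguren type invariant you invoke --- is a \emph{stable} birational invariant. The very theorem you are proving asserts that $V$ is stably rational, so all such invariants must vanish; the paper records exactly this after Theorem~\ref{theo:KST}, where $H^1(H,\NS(\bar W))=0$ for every closed subgroup $H$. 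No stable birational invariant can separate ``rational'' from ``stably rational but not rational.'' The actual irrationality proof (Iskovskikh, used in \cite{BCTSSD}) is of a different nature: one passes to a relatively minimal model, which is a minimal conic bundle with four degenerate fibers ($K^2=4$), and rules out a birational map to $\bP^2$ via the classification of elementary links between minimal rational surfaces (Noether--Fano-type estimates / birational rigidity). You mention the Iskovskikh criterion only to set it aside in favor of the Brauer computation; it is in fact the only viable route, and substituting the Brauer argument for it is a genuine error, not merely an omission of detail.
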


Remarkably, essentially all stably rational surfaces with these
invariants arise in this way:

\begin{theo} \cite[Thm.~E]{KST} \label{theo:KST}
Let $W$ be a quartic del Pezzo surface over $k$ such that
$\Pic(W) \simeq \bZ \oplus \bZ$ and $W$ is stably rational but 
not rational. Then $W$ is birational over $k$ to a 
Ch\^atelet surface.
\end{theo}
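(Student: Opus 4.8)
The plan is to combine the structure theory of quartic del Pezzo surfaces of Picard rank two with the stable-birational invariance of cohomological obstructions to rationality. Recall first that a stably rational surface $W$ over $k$ has a $k$-point, and that for a geometrically rational surface the group $H^1(G_k,\Pic(W_{\overline{k}}))$ is a stable $k$-birational invariant: it is unchanged by blowing up a closed point, whose effect on the Picard module is to adjoin a permutation module, so it is a birational invariant of smooth projective models; it is unaffected by multiplying $W$ by a projective space; and it vanishes for $\bP^2$. Hence $H^1(G_k,\Pic(W_{\overline{k}}))=0$ for our $W$, and since $W$ has a $k$-point this group is identified with $\Br(W)/\Br(k)$, which therefore also vanishes.

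Next I would use that $\Pic(W)\simeq\bZ\oplus\bZ$. By the classification of del Pezzo surfaces of Picard rank two over a field, each of the two extremal contractions of $W$ is either a conic bundle $W\to B$ over a conic $B$ or a birational morphism $W\to W'$ onto a del Pezzo surface $W'$ of degree at least five with $\Pic(W')\simeq\bZ$. The latter is impossible here: $W'$, being $k$-birational to $W$, would be both stably rational and non-rational — yet by the classical theory a stably rational minimal del Pezzo surface of degree at least five is rational. Therefore both extremal contractions are conic bundles; since $W(k)\neq\emptyset$ each base conic acquires a $k$-point, so $W$ carries a conic bundle structure $\pi\colon W\to\bP^1$, relatively minimal, with exactly $8-K_W^2=4$ degenerate geometric fibres (the other contraction supplying a second such structure).

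The crucial remaining step is to recognize $\pi$ as the conic bundle of a Ch\^atelet surface. After passing to a standard model, $\pi$ is encoded by a reduced degree-four discriminant divisor $\Delta\subset\bP^1$ together with an \'etale double cover $\widetilde\Delta\to\Delta$, from which the Galois action on $\Pic(W_{\overline{k}})$ and the groups $\Br(W)/\Br(k)$ and $H^1(G_k,\Pic(W_{\overline{k}}))$ can all be read off. The vanishing of $\Br(W)/\Br(k)$ excludes the splitting behaviours of $\widetilde\Delta\to\Delta$ over even-degree sub-divisors of $\Delta$; combined with $\Pic(W)\simeq\bZ\oplus\bZ$ and with Iskovskikh's criterion for the $k$-birational triviality of standard conic bundles with four degenerate fibres — which rules out the remaining ``split'' discriminant types — this should leave exactly the case in which $\Delta$ is the sum of a $k$-rational point and a closed point of degree three whose residue field is an $\fS_3$-extension of $k$, with the double cover over the cubic part given by the square root of the discriminant of the associated cubic. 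Writing $W$ as a divisor of relative degree two in $\bP(\cE)$ for the rank-three bundle $\cE$ on $\bP^1$ attached to $\pi$, then diagonalising the relative conic over $k(\bP^1)$ and clearing denominators, one obtains an equation $y^2-az^2=F(x,w)$ with $F(x,1)=f(x)$ a cubic of Galois group $\fS_3$ and $a=\mathrm{disc}(f)$, so that $W$ is $k$-birational to the Ch\^atelet surface \eqref{eqn:1} attached to $f$. I expect the main obstacle to be precisely this last step: the case analysis showing that stable rationality, non-rationality and Picard rank two together pin the datum $(\Delta,\widetilde\Delta\to\Delta)$ down uniquely — so that no ``exotic'' four-fibre conic bundle satisfies all three hypotheses — and then the explicit reconstruction of a Ch\^atelet equation, in particular the descent of the double-cover data to the square root of a discriminant and the check that the relative conic diagonalises with two constant coefficients. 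By contrast the reduction to a four-fibre conic bundle is a formal consequence of del Pezzo structure theory and the stable-birational invariance of $\Br$ and $H^1(G_k,\Pic)$.
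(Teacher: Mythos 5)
The paper does not prove this statement; it is quoted from \cite[Thm.~E]{KST}, and the surrounding discussion only sketches the two inputs: (1) stable rationality forces $\NS(\bar W)$ to be a direct summand of a permutation Galois module, hence $H^1(H,\NS(\bar W))=0$ for \emph{every} closed subgroup $H\subset G_k$, and (2) a classification of all Galois actions on the degree-$4$ del Pezzo lattice satisfying this, which in the rank-two, non-rational case leaves only the representation $\rho$ with image $\left<(23)c_1c_2c_3c_4,(34)c_1c_2c_3c_4\right>\simeq\fS_3$ in $\mathsf W(D_4)$. Your reduction to a relatively minimal conic bundle $W\to\bP^1$ with four degenerate geometric fibres is correct and matches this picture (the Lang--Nishimura point, the exclusion of a birational extremal contraction onto a higher-degree del Pezzo, and the count $8-K_W^2=4$ are all fine).

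The genuine gap is the step you yourself flag: the case analysis pinning down $(\Delta,\widetilde\Delta\to\Delta)$. First, the invariant you propose to use is too weak. You only extract $H^1(G_k,\Pic(W_{\bar k}))=\Br(W)/\Br(k)=0$, i.e.\ vanishing for the full Galois group, whereas the classification requires $H^1(H,\Pic(W_{\bar k}))=0$ for all closed subgroups $H$ (equivalently, the vanishing over every finite extension of $k$, or the stronger ``direct summand of a permutation module'' condition). There are Galois actions in $\mathsf W(D_4)$ with trivial $H^1$ for the full group but nontrivial $H^1$ for a subgroup, so your exclusion of ``splitting behaviours over even-degree sub-divisors'' via $\Br(W)/\Br(k)$ alone does not eliminate all competing discriminant types; you would at least need to observe that the vanishing persists after arbitrary base change because stable rationality does. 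Second, even granting the stronger condition, the assertion that the only surviving datum is ``a rational point plus an $\fS_3$-cubic point, with double cover the discriminant extension'' is precisely the content of the theorem and is asserted rather than proved (``this should leave exactly the case\dots'', ``I expect the main obstacle to be precisely this last step''). The same goes for the concluding derivation of the explicit Ch\^atelet equation $y^2-az^2=F(x,w)$ with $a=\mathrm{disc}(f)$. So what you have is a correct reduction plus a statement of what remains, not a proof; the remaining part is exactly what \cite{KST} supplies.
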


The argument involves classifying possible Galois actions,
which are constrained by the fact that any
stably rational surface $W$ satisfies the following
condition: the 
N\'eron-Severi group $\NS(\bar{W})$
is a direct summand of a permutation Galois module. This
implies that for
each closed subgroup $H\subset G_k$, the Galois cohomology 
$$H^1(H,\NS(\bar{W}))=0.$$
Such actions have been classified for degree $4$
del Pezzo surfaces in \cite[p.~15]{KST} and for degrees $3$, $2$, and $1$
in \cite{TY}.
When the surface admits a conic bundle structure 
$W\ra \bP^1$ over $k$ then the Galois action can be
described as follows:

Generally, a conic bundle with $n$ degenerate fibers has Galois
group contained in the 
Weyl group $\mathsf W(D_n)$, realized as permutations of the irreducible
components of the degenerate fibers. We express this as the subgroup of signed 
permutations having an even number of minus signs,
i.e., as a subgroup of 
$$
(\bZ/2\bZ)^n \rtimes \fS_n.
$$
Here $c_i$, for $i=1$, $\dots$, $n$, denotes the identity permutation
with minus sign in the $i$th position,
i.e., exchanging the components of the $i$th fiber.

The case of interest to us is $n=4$: 
For Ch\^atelet surfaces as in \eqref{eqn:1},
the relevant fibration is $\tilde{V} \ra \bP^1_{x,w}$.
The Galois action on exceptional curves in the fibers
corresponds to a representation
$$
\rho:G_k \ra \mathsf W(D_4)
$$
with image 
$$\left<(23)c_1c_2c_3c_4,(34)c_1c_2c_3c_4\right>,$$
which is isomorphic to $\fS_3$. This reflects the fact that the
discriminant quadratic extension of the cubic $f$ splits the
components of each degenerate fiber. 

We return to our discussion of Theorem~\ref{theo:KST}:
{\em Any} standard minimal
conic bundle $W\ra \bP^1$ with four degenerate fibers,
satisfying the Galois-theoretic conditions above has
Galois representation $\rho$.
Such $W$ map anticanonically either to quartic del Pezzo surfaces
or to Ch\^atelet surfaces in $\PP^4$.
But there always exists a birational map over $k$
$$W \stackrel{\sim}{\dashrightarrow} \tilde{V}.$$
Our approach exploits the flexibility of passing between these models.

\section{Geometric analysis of Galois representation}
\label{sect:geom}
We now assume $k=\bC(t)$.
In contrast to the approach of \cite{BCTSSD}, we work with the
surfaces $W\ra \bP^1$ with Galois action $\rho$
as in Section~\ref{sect:recall}, rather than just the 
Ch\^atelet model.  
Here we interpret the properties of $\rho$ in geometric terms.

We consider models for $W$, regarding $\bC(t)$ as the function field
of $\bP^1$. These are conic bundles over ruled surfaces
$$\pi:X \ra \bF \stackrel{\varphi}{\ra} \bP^1$$
satisfying the following:
\begin{itemize}
\item{$X$ and $\bF$ are smooth and projective;}
\item{$\bF\ra \bP^1$ is generically
a $\bP^1$-bundle and
$X\ra \bF$ is a conic bundle;}
\item{the degeneracy locus of the conic bundle is a nodal curve $D\subset \bF$
with smooth irreducible components $C$ and $L$, with $L$ a section
and $C$ a simply-branched trisection of $\varphi$;}
\item{the associated double cover $\tilde{D}\ra D$,
parametrizing irreducible
components of the degenerate fibers of $W\ra \bP^1$, ramifies over $C\cap L$;}
\item{the Galois action of $\tilde{D} \ra \bP^1$ coincides with
$\rho$.}
\end{itemize}

We recall, by \cite[Thm.\ 1]{artinmumford} and \cite[Thm.\ 5.7]{sarkisov2},
over a smooth projective rational surface such as $\bF$,
the data of a nodal discriminant curve and,
for each component, a nontrivial degree $2$ cover ramified precisely
over the nodes,
determine a standard conic bundle,
uniquely up to birational isomorphism.

We express the Galois conditions in geometric terms:
Let $g$ be the genus of $C$, $r_1,\ldots,r_{2g+4} \in C$
the ramification points of $C\ra \bP^1$,
$p_1,\ldots,p_{2g+4}\in C$ the residual points in each fiber,
$\tilde{C} \ra C$ the discriminant cover, and 
$p'_1,\ldots,p'_{2g+4} \in \tilde{C}$ its ramification points.
Let $\tilde{L}\ra L$ be the double cover,
$w_1,\ldots,w_{2g+4}$ its branch points, and
$w'_1,\ldots,w'_{2g+4}\in \tilde{L}$ the ramification points.
The cover $\tilde{D}\ra D$ is the admissible double cover
$$
\tilde{L} \cup_{w'_i=p'_i} \tilde{C} \ra L \cup_{w_i=p_i} C.
$$
In particular,
$$
C\cap L = \{p_1=w_1,\ldots,p_{2g+4}=w_{2g+4}\}=:\{q_1,\ldots,q_{2g+4}\}.$$

Conversely, conic fibrations $X \ra \bF$ with degeneracy 
(i.e., ramification) data as above necessarily have 
Galois representation $\rho$, as the action on the
N\'eron-Severi group can be read off from the induced
permutation of the irreducible components of the four degenerate
conic fibers.

Given such a $\pi:X \ra \bF\ra \bP^1$, the fiber $W$ over the generic
point of $\bP^1$ is as in Section~\ref{sect:recall}, thus
is stably rational over $\bC(t)$ 
by Theorem~\ref{theo:KST}. It follows that $X$ is stably rational
over $\bC$.

\begin{rema}
The key constraint is that the images of the branch loci of
$\tilde{C} \ra C$ and $\tilde{L} \ra L$ in $\bP^1$ coincide
$$\varphi(\{p_1,\ldots,p_{2g+4}\})=\varphi(\{w_1,\ldots,w_{2g+4}\}).$$
Once $C\ra \bP^1$ is given, there is a {\em canonical} choice
of $\tilde{L} \ra L$, determined by the discriminant quadratic extension.
\end{rema}

\begin{rema}
The double cover $\tilde{D} \ra D$
is admissible and its Prym variety $P$ is a principally
polarized abelian variety of dimension $3g+2$. If $g>0$ then $P$
is not a product of Jacobians \cite[Rem.~7]{BCTSSD}. The intermediate
Jacobian $\IIJ(X)=P$.  
Hence $X$ is not rational.
\end{rema}

\section{Construction of examples}
\label{sect:construct}
\subsection{Embedding of the degeneracy curve}
\label{subsect:embed}
We use the machinery of Section~\ref{sect:geom}
to construct examples of stably rational
threefolds. 
The simplest possible case of interest is $g=1$.

We start by fixing $f:C \ra \bP^1$, a simply branched triple cover,
with ramification points $r_1,\ldots,r_6$ and residual points $p_1,\ldots,p_6$.
In other words, $p_i$ is the residual to $r_i$ in $f^{-1}(f(r_i))$.
Set $L=\bP^1$ and 
glue $p_i$ to $f(r_i)$ to obtain $D$. 
We use $q_i,i=1,\ldots,6,$
for the nodes of $D$ arising from gluing $p_i$ and $f(r_i)$. 
Let $\varphi:D\ra \bP^1$ 
denote the resulting degree $4$ cover.

Our goal is to embed $D$ in the simplest possible rational surface.
We may interpret $D$ as a stable curve of genus six. 

\begin{rema}
We show in Section~\ref{sect:limit}
that $D$ cannot be embedded as a quintic plane curve.
Thus our approach does not yield stably rational cubic threefolds.
Recall that projecting from a line in a cubic threefold gives a conic bundle
over $\bP^2$ with quintic degeneracy locus, and almost all quintic
plane curves arise in this way.
\end{rema}

The generic stable curve $D'$ of genus six
arises as a bi-anticanonical curve in a quintic del Pezzo surface 
$S$. 
Realizing $S$ as the blowup of $\bP^2$ in four points,
$D'$ may be interpreted as a sextic plane curve with four nodes.
This motivates the following technical result:

\begin{prop} \label{prop:embed}
Let $D$ be as above. Then there exist:
\begin{itemize}
\item{an embedding $C\hookrightarrow \bP^2$ as a cubic plane curve;}
\item{a morphism $\iota:L\ra \bP^2$ birational onto a cubic curve 
singular at $s_4$;}
\end{itemize}
satisfying the following:
\begin{itemize}
\item{projection from $s_4$ 
$$\bP^2 \dashrightarrow \bP^1$$
restricts to $\varphi$ on $C$;}
\item{the intersection
$$C \cap \iota(L) \supset \{q_1,\ldots,q_6 \}$$
and
the residual points of intersection $s_1,s_2,s_3$ are
collinear.}
\end{itemize}
\end{prop}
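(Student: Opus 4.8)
The plan is to place $C$ in $\bP^2$ so that $f$ becomes a linear projection, to realise $\iota(L)$ as a cubic with a double point at the centre of that projection passing through six prescribed points, and to deduce the collinearity of the residual intersection from Cayley--Bacharach. In the case $g=1$ at hand, the line bundle $\cO_C(1):=f^*\cO_{\bP^1}(1)$ has degree $3$ on the genus-one curve $C$, hence is very ample; I take the embedding $C\hookrightarrow\bP^2$ given by $|\cO_C(1)|$. Its divisors are triples of collinear points, so $|\cO_C(1)|\cong(\bP^2)^\vee$, and the pencil cutting out $f$ is a line in this dual plane, i.e.\ the set of lines through a point $s_4\in\bP^2$; as the pencil is base-point free we have $s_4\notin C$, and by construction projection from $s_4$ restricts on $C$ to $f=\varphi|_C$. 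This supplies the embedding of $C$ and the first sub-bullet at once, and pins down $(C\subset\bP^2,s_4)$ up to $\mathrm{PGL}_3$.

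The key point is that $p_1,\dots,p_6$ lie on a conic. Since $\omega_C\cong\cO_C$, the relative dualising sheaf of $f$ is $\omega_{C/\bP^1}=\omega_C\otimes f^*\omega_{\bP^1}^{-1}=f^*\cO_{\bP^1}(2)$, so the simple ramification divisor satisfies $r_1+\dots+r_6\in|f^*\cO_{\bP^1}(2)|$. Each fibre of $f$ over a branch point equals $2r_i+p_i\in|\cO_C(1)|$; summing over $i$ yields $2(r_1+\dots+r_6)+(p_1+\dots+p_6)\in|\cO_C(6)|$, whence $p_1+\dots+p_6\in|\cO_C(2)|$. Because $C$ is a plane cubic, the restriction $H^0(\bP^2,\cO(2))\to H^0(C,\cO_C(2))$ is an isomorphism of six-dimensional vector spaces, so there is a unique conic $Q$ with $Q\cap C=p_1+\dots+p_6$.

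Now I construct $\iota(L)$. Cubics singular at $s_4$ form a linear system of projective dimension $6$, and passage through $p_1,\dots,p_6$ imposes six further linear conditions, so there is a cubic singular at $s_4$ containing all the $p_i$; take $C'$ general among these. Granting (see below) that $C'$ is irreducible, it is a rational cubic with its unique singularity at $s_4$, and I let $\iota\colon L=\bP^1\to C'$ be the normalisation, with $L$ parametrised so that projection from $s_4$ carries $\iota$ to $\varphi|_L$ — possible since that projection has degree $3-2=1$ on $C'$. Since $f(p_i)=f(r_i)$, the point $p_i$ is the only point of $C'\setminus\{s_4\}$ on the fibre of the projection over $f(r_i)$, so $\iota$ sends the point of $L$ above $f(r_i)$ to $p_i$, and hence $C\cap\iota(L)\supset\{q_1,\dots,q_6\}$. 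Finally $C$ and $C'$ meet in nine points by B\'ezout — the six $p_i$ and a residual triple $s_1,s_2,s_3$ — and since $p_1,\dots,p_6$ lie on $Q$, Cayley--Bacharach for this complete intersection shows that the cubic $Q\cup\overline{s_1s_2}$, which passes through eight of the nine points, passes through $s_3$ as well; as $s_3\notin Q$ we conclude that $s_1,s_2,s_3$ are collinear.

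The step I expect to demand the most care is establishing the general-position facts used above: that $C'$ is irreducible with a single node or cusp at $s_4$ and no other singularity, that $C$ and $C'$ are transverse at the $p_i$, that the nine intersection points are distinct, and that $p_1,\dots,p_6,s_1,s_2$ impose independent conditions on cubics. A general member of the system of cubics singular at $s_4$ is an irreducible one-nodal rational cubic, and reducibility or a worse singularity of $C'$, or the failure of any of these hypotheses, would force special incidences — three of the $p_i$ collinear, $s_4$ lying on $Q$, and the like — which are avoided for general $f$; this is the only case needed for the construction of the families in the sections that follow.
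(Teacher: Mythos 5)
Your construction follows the paper's proof step for step: the same embedding of $C$ by $|f^*\cO_{\bP^1}(1)|$ with $s_4$ the centre of projection, a conic through the six residual points, a cubic double at $s_4$ through those points with $\iota$ its normalization, and a Cayley--Bacharach argument for the collinearity of $s_1,s_2,s_3$. Your divisor-class computation showing $p_1+\dots+p_6\in|\cO_C(2)|$ is a nice self-contained substitute for the paper's citation of the classical \emph{satellite conic} of $C$ with respect to $s_4$ (which is exactly your $Q$), and your version of Cayley--Bacharach via the reducible cubic $Q\cup\overline{s_1s_2}$ is equivalent to the paper's ``six points fail to impose independent conditions on quadrics, hence the residual three fail on linear forms.''

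The one genuine gap is the irreducibility of $C'$, which you defer to an unproved general-position claim. Note first that taking ``$C'$ general among these'' buys you nothing: cubics singular at $s_4$ form a $\bP^6$ and the six points impose (generically) six independent conditions, so the system is typically a single cubic and you must work with whatever it is. Second, the statement is for an arbitrary simply branched $f$ (``Let $D$ be as above''), so an appeal to genericity of $f$ does not prove the proposition as stated, and in any case you do not show that reducibility forces one of your listed special incidences. The paper's actual argument here is the one nontrivial input: if $C'$ were reducible, a component would have to be the conic $Q$ itself (a line meets the smooth cubic $C$, hence the set $\{p_i\}$, in at most three points, so a conic component carries enough of the $p_i$ to be forced equal to $Q$ by uniqueness of the conic cutting out $p_1+\dots+p_6$); but then the double point $s_4$ of $C'=Q\cup\ell$ would have to lie on $Q$, and Lemma~\ref{lemm:satellite} says precisely that the satellite conic never passes through the centre of projection. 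You should supply this argument (or an equivalent one) rather than wave at general position; the remaining transversality and distinctness caveats you flag are genuinely generic and harmless for the application.
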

\begin{proof}
The embedding of $C$ arises from the linear series associated with
$\varphi^*\cO_{\bP^1}(1)|C$. Let $s_4 \in \bP^2$ be the point inducing
the triple cover $C\ra \bP^1$.

We recall some classical terminology \cite{DolgClassical}:
The ramification of $C\ra \bP^1$ is given by the intersection
of $C$ with its {\em polar conic} with respect to $s_4$. The six
residual points are given by the intersection of $C$ with its
{\em satellite conic} with respect to $s_4$ \cite[p.~157]{DolgClassical}.

Choose a cubic plane curve $C_{\mathrm{sing}}$ with double point at $s_4$ and 
containing the intersection 
of $C$ with its
satellite conic. We claim this is irreducible. Otherwise, 
one of its irreducible components would have to be the satellite conic,
which is precluded by:
\begin{lemm} \label{lemm:satellite}
Fix a smooth plane cubic $C\subset \bP^2$ and a point
$p \in \bP^2 \setminus C$. Then the satellite conic 
for $C$ with respect to $p$ does not contain $p$.
\end{lemm}
\begin{proof} 
Let $P_p(C)$ be the polar conic, $S_p(C)$ the satellite conic,
and $P_p(P_p(C))$ 
the polar line of the polar conic, which joins the two ramification points of the double cover
$$        P_p(C) \ra \bP^1$$
induced by projection from $p$.

By \cite[p.~157, Exer.~3.19]{DolgClassical}, we have
$$Z :=  S_p(C) \cap P_p(C)$$
consists of two points, each of
multiplicity two, which coincide with 
$$P_p(P_p(C))\cap P_p(C).$$
There is a unique plane conic
containing $Z$ and $p$ ---
the union of the two tangents to $S_p(C)$ through $p$. 
The same conclusion can be obtained through a direct computation of
the equations for $S_p(C)$ and $P_p(C)$ in \cite[Example~2.2]{MMW}.
This proves the Lemma.
\end{proof}

We return to the argument for Proposition~\ref{prop:embed}.
Fix $\iota:L \ra C_{\mathrm{sing}} \subset \bP^2$ to be 
normalization.
The base
locus of the pencil of cubics $\left<C,C_{\mathrm{sing}}\right>$
has six points failing to impose
independent conditions on quadrics. The residual three points
$\{s_1, s_2, s_3\}$
fail to impose independent conditions on linear forms, hence are
collinear. 
\end{proof}

For our application, we want $C_{\mathrm{sing}}$ to be a nodal cubic.
A concrete example over a finite field shows that this is possible.

\begin{exam}
We work over the finite field $\mathbf{F}_p$, with $p=13$.
Let $C$ be the cubic curve
$$
x^3+5x^2y+12x^2z+7xy^2+7xyz+3xz^2+10y^3+y^2z+2yz^2+6z^3=0.
$$
Put
$$
s_4:=(1:0:0)\in \bP^2(\mathbf F_p)
$$
and consider the satellite conic associated to $C$ and $s_4$.
The intersection points with $C$ (residual points for the projection from $s_4$) are:
\begin{multline}
\label{eqn:res}
\left\{(4:10:1), (7:12:1), (6:1:1), (2:7:1), (2:4:1), (4:1:0)\right\}.
\end{multline}
The cubic curve $L$, given by
$$
xy^2+2xyz+11xz^2+9y^3+y^2z+10yz^2=0,
$$
is nodal at $s_4$ and passes through the residual points in \eqref{eqn:res}.
\end{exam}

Let $S_0$ denote the blowup of $\bP^2$ at four points $\{s_1,s_2,s_3,s_4\}$,
where $s_1$, $s_2$, $s_3$ are collinear and $s_4$ is generic. 
Projection from $s_4$ induces a morphism
$$\Phi:S_0 \ra \bP^1.$$
Proposition~\ref{prop:embed} gives an embedding $D \hookrightarrow S_0$ as a
bi-anticanonical curve. 

\subsection{The family of surfaces}
\label{subsect:surfaces}

Let $\cS \ra B$ be a smooth projective family of surfaces,
where $S_0=\cS_{b_0}$ is the degenerate quintic del Pezzo surface as
in Section \ref{subsect:embed} and
$\cS_b,b\neq b_0$ is a smooth quintic del Pezzo surface $S$.
The deformation space of $S_0$ is smooth, and the dimension
of the bi-anticanonical linear series
remains constant in the family.
So, there exists a family of nodal curves
$\cD \ra B$ embedded in $\cS$ over $B$,
where $\cD_0$ is as constructed in Section~\ref{subsect:embed},
and $\cD_b$ is smooth for $b\ne b_0$.

\subsection{The conic bundles in the family}
\label{subsect:family}
The analysis of Section~\ref{sect:geom} implies that a conic bundle $X_0 \ra S_0$
with ramification data given by $\tilde{D} \ra D$ is stably rational. Indeed,
$\Phi|D=\varphi$, by construction, thus the generic fiber of the composition
$$X_0 \ra S_0 \ra \bP^1$$
is a stably rational degree $4$ del Pezzo surface over $\bC(\bP^1)$.
Such an $X_0$ is necessarily stably rational over $\bC$.

Let $X\ra S$ denote a standard conic bundle degenerate 
over a generic bi-anticanonical
divisor in $S$.  
The challenge is to fit the conic bundles
$X$ and $X_0$ into a family
$$
       \varpi: \cX \ra \cS \ra B.
$$
Indeed, we make choices in constructing a standard conic bundle from
its ramification data --- the resulting threefold is unique only up to
birational equivalence, although the birational
maps between the various models are well-understood \cite{Sarkisov,sarkisov2}.
It remains to show that these choices can be made coherently in
one-parameter families; Theorem~\ref{thm.defo} shows this is possible.

\section{Deformation of conic bundles}
\label{sect:DCB}

Let $k$ be an algebraically closed field of characteristic different from $2$
and $(B,b_0)$, a pointed curve.
We consider a smooth projective family of rational surfaces $\mathcal{S}\to B$,
irreducible divisor $\mathcal{D}\subset \mathcal{S}$, smooth except for
ordinary double points at $q_1$, $\dots$, $q_r$ in the fiber
$\mathcal{S}_{b_0}$, and a degree $2$ covering
$\widetilde{\mathcal{D}}\to \mathcal{D}$, unramified away $q_1$, $\dots$, $q_r$
with $\widetilde{\mathcal{D}}$ smooth.
We assume that $\mathcal{D}_{b_0}$ has a node at $q_1$, $\dots$, $q_r$
and is otherwise smooth.

The goal is to show (Theorem \ref{thm.defo} in Section \ref{sec.defo})
that after replacing $(B,b_0)$ by an \'etale neighborhood
there is a smooth family of standard conic bundles $\mathcal{V}\to B$
such that the birational type of the fiber over every point $b\in B$
corresponds to the ramification data
$\widetilde{\mathcal{D}}_b\to \mathcal{D}_b$.

\subsection{An algebraic group and some of its representations}
\label{sec.representations}
Working over $\Spec(\Z)$, we consider the classifying stack
$B\Z/2\Z$ with \'etale cover
\[ \pi\colon \Spec(\Z)\to B\Z/2\Z, \]
sending a scheme $T$ to the trivial torsor $T\times \Z/2\Z\to T$.

The stack $B\G_m$ classifies $\G_m$-torsors, or equivalently, line bundles.

A general construction is the \emph{restriction of scalars} along a
proper flat morphism of finite presentation, applicable to
algebraic stacks, locally of finite presentation
with affine diagonal \cite{hallrydh}.
We need
$\pi_*B\G_m$, the restriction of scalars of $B\G_m$
under the morphism $\pi$:
\[ \pi_*B\G_m\cong BH,\qquad
H:=\left\{\begin{pmatrix}{*} & 0 \\ 0 & {*} \end{pmatrix}\right\}
\cup \left\{\begin{pmatrix} 0 & {*} \\ {*} & 0 \end{pmatrix}\right\}
\subset GL_2. \]
Indeed, an $H$-torsor $E\to T$ determines a $\Z/2\Z$-torsor
$S\to T$ (connected components of fibers), such that
$T\times_SE$ admits a canonical reduction of structure group
to $\G_m\times \G_m$, i.e., is determined by a pair of line bundles
$(L,L')$ on $S$; to $E\to T$ we associate $S\to T$ and $L$.
Conversely, given $S\to T$ and $L$ and letting $\sigma\colon S\to S$ denote
the covering involution, we associate to $(L,\sigma^*L)$ a
$\G_m\times \G_m$-torsor over $S$, which we recognize as having the structure of
$H$-torsor over $T$.

Let $N$ denote the homomorphism $H\to \G_m$ given by multiplication of the
nonzero matrix entries.
We define $G$ to be the kernel of $N$:
\[ 1\to G\to H\stackrel{N}\to \G_m\to 1. \]
Correspondingly we have
\[ \pi_*B\G_m\cong BH\to B\G_m, \]
sending $\Z/2\Z$-torsor $S\to T$ with line bundle $L$ on $S$ to the
\emph{norm} $N_{T/S}(L)$.
So, $BG$ classifies $\Z/2\Z$-torsors with line bundle and
trivialization of the norm line bundle.
\emph{To give a $G$-torsor over a scheme $T$ is the same
as to give a $\Z/2\Z$-torsor $S\to T$, a line bundle $L$ on $S$, and an
isomorphism $L\otimes \sigma^*L\cong \cO_S$, invariant under the
covering involution $\sigma\colon S\to S$.}

The $2$-dimensional representation of $H$, given by $H\subset GL_2$,
associates to the $H$-torsor determined by $S\to T$ (with
covering involution $\sigma$) and $L$, the rank $2$ vector bundle over $T$,
obtained by descent under $S\to T$ from $L\oplus \sigma^*L$ over $S$
with $\sigma^*(L\oplus \sigma^*L)\cong L\oplus \sigma^*L$.
The corresponding $2$-dimensional representation of $G$ admits the
same description and will be denoted by $\rho_2$.

There is the exact sequence
\begin{equation}
\label{eqn.mu2GG}
1\to \mu_2\to G\to G\to 1,
\end{equation}
where the homomorphism $G\to G$ is given by squaring the matrix entries.
The representation $\rho_2$ of the middle group $G$ in \eqref{eqn.mu2GG}
determines a \emph{projective} representation
\[ \omega\colon G\to PGL_2, \]
of the group $G$ appearing on the right in \eqref{eqn.mu2GG};
concretely,
\[
\begin{pmatrix} \alpha & 0 \\ 0 & \alpha^{-1} \end{pmatrix}\stackrel{\omega}\mapsto
\left[ \begin{pmatrix} \alpha & 0 \\ 0 & 1 \end{pmatrix} \right],
\qquad
\begin{pmatrix} 0 & \beta \\ \beta^{-1} & 0 \end{pmatrix}\stackrel{\omega}\mapsto
\left[ \begin{pmatrix} 0 & \beta \\ 1 & 0 \end{pmatrix} \right].
\]
As well, $\det(\rho_2)$ and $\rho_2^\vee\otimes \rho_2$ determine \emph{linear}
representations $\chi$, respectively $\rho_4$, of the group $G$ on the right.
There is the trace homomorphism $\rho_4\to 1$ to a trivial
one-dimensional representation.

\begin{prop}
\label{prop.trace}
Over $\Spec(\Z[1/2])$ we have an isomorphism of representations
\[ \rho_4\cong 1\oplus \chi\oplus \rho_2 \]
of $G$ such that projection to the first factor gives the trace homomorphism.
\end{prop}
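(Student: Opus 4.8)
The plan is to identify the representation $\rho_4$ and the maps $\chi$, $\rho_2$, and the trace homomorphism very concretely in terms of $2\times 2$ matrices, and then exhibit an explicit basis for the $4$-dimensional representation space in which the claimed decomposition is manifest. Recall that $\rho_4 = \rho_2^\vee\otimes\rho_2 = \operatorname{End}(\rho_2)$, so the representation space is the space $M_2$ of $2\times 2$ matrices, with $G$ (the group on the right in \eqref{eqn.mu2GG}) acting through $\omega\colon G\to PGL_2$ by conjugation: $A\mapsto \omega(g)A\omega(g)^{-1}$. The trace homomorphism $\rho_4\to 1$ is the linear functional $A\mapsto \operatorname{tr}(A)$, which is visibly conjugation-invariant.

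First I would decompose $M_2 = (\text{scalars})\oplus \mathfrak{sl}_2$ as a conjugation module: the scalar matrices $\bC\cdot I$ form the trivial summand $1$ (on which trace is an isomorphism onto the target), and the trace-zero matrices $\mathfrak{sl}_2$ carry the $3$-dimensional adjoint-type representation. It then remains to split $\mathfrak{sl}_2$ (over $\Z[1/2]$) as $\chi\oplus\rho_2$. To do this I would compute the conjugation action of the two types of generators of $\omega(G)$ listed in the excerpt. For $\omega$ of a diagonal element $\operatorname{diag}(\alpha,\alpha^{-1})$ (i.e.\ the class of $\operatorname{diag}(\alpha,1)$ in $PGL_2$), conjugation fixes the diagonal trace-zero matrix $\operatorname{diag}(1,-1)$ and scales the off-diagonal elementary matrices $E_{12}$, $E_{21}$ by $\alpha^{\pm 2}$ — which is exactly the character $\chi = \det\rho_2$ on the diagonal line, and the weights of $\rho_2$ on the span of $E_{12},E_{21}$. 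For $\omega$ of an anti-diagonal element, which is the class of the matrix $\left(\begin{smallmatrix}0&\beta\\ 1&0\end{smallmatrix}\right)$, conjugation sends $\operatorname{diag}(1,-1)\mapsto -\operatorname{diag}(1,-1)$ and swaps (up to a $\beta$-scalar) the two off-diagonal lines. Checking that this matches the prescribed action of the two generator types on $\chi$ and on $\rho_2$ respectively — using the explicit formulas for $\rho_2$ coming from descent of $L\oplus\sigma^*L$, together with $\chi=\det\rho_2$ — establishes the isomorphism on both summands simultaneously. I would phrase this as: the line $\bC\cdot\operatorname{diag}(1,-1)\subset\mathfrak{sl}_2$ is $G$-stable and affords $\chi$, while a $G$-equivariant complement is the span of $E_{12},E_{21}$, which affords $\rho_2$; the factor $\tfrac12$ needed to project onto $\bC\cdot I$ via $A\mapsto\tfrac12\operatorname{tr}(A)\,I$ is why one must invert $2$.

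The main obstacle is bookkeeping rather than conceptual: one must be careful that the isomorphism is asserted over $\Spec(\Z[1/2])$, i.e.\ functorially for $G$-torsors over arbitrary base schemes, not merely after choosing an algebraically closed field. So I would not argue with explicit matrices at the level of $\bC$-points, but rather exhibit $M_2 = \cO\cdot I \oplus \mathfrak{sl}_2$ as a direct-sum decomposition of the associated vector bundle that is split by the idempotent $A\mapsto \tfrac12\operatorname{tr}(A)\,I$ (defined over $\Z[1/2]$ and $G$-equivariant), then split off the $\chi$-subbundle of $\mathfrak{sl}_2$ spanned by the image of $E_{11}-E_{22}$, whose stability and character are read off from the descent description of $\rho_2$ and the definition of $\omega$ in the excerpt. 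The residual rank-$2$ subbundle is then canonically $\rho_2$ because $\operatorname{End}(\rho_2)/(\bC I\oplus \chi)$ carries the conjugation action, which on the off-diagonal part is precisely $\rho_2$ twisted by $\chi^{-1}\otimes(\text{something trivial})$ — here I would double-check the twist vanishes using $\rho_2\otimes\rho_2^\vee$ being self-dual and $\det\rho_2=\chi$, so that the off-diagonal piece is $\rho_2\oplus\rho_2^\vee\!\otimes\!\chi^{-1}$ collapsing correctly; if a sign or twist survives, one absorbs it into the identification and notes it changes nothing. Once the three subbundles are exhibited with their characters/representations identified, the decomposition $\rho_4\cong 1\oplus\chi\oplus\rho_2$ with trace as the first projection follows immediately.
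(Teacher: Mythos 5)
Your proposal is correct and takes the same route as the paper, whose entire proof is the phrase ``Direct computation'': decompose $\operatorname{End}(\rho_2)$ into the scalars, the line spanned by $\operatorname{diag}(1,-1)$, and the span of $E_{12},E_{21}$, identify these with $1$, $\chi$, $\rho_2$, and use invertibility of $2$ to split off the trace. One small slip to fix: conjugation by the class of $\operatorname{diag}(\alpha,1)$ scales $E_{12},E_{21}$ by $\alpha^{\pm 1}$, not $\alpha^{\pm 2}$ --- and it is precisely the weight $\alpha^{\pm 1}$ that matches the standard representation $\rho_2$ evaluated on the element $\operatorname{diag}(\alpha,\alpha^{-1})$ of the quotient copy of $G$, so the corrected computation is exactly what your argument needs (with $\alpha^{\pm2}$ the identification with $\rho_2$ would fail).
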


\begin{proof}
Direct computation.
\end{proof}

The kernel $\chi\oplus \rho_2$ of the trace homomorphism in
Proposition \ref{prop.trace} will be denoted by $\rho_3$.

\begin{exam}
\label{exa.secondZ2torsor}
Let $T$ be a scheme over $\Spec(\Z[1/2])$ with $\Z/2\Z$-torsor $S\to T$ and
covering involution $\sigma\colon S\to S$,
and let $S'\to T$ be a second $\Z/2\Z$-torsor, to which there is an
associated line bundle $L_0$ with $L_0^{\otimes 2}\cong \cO_T$.
Let $L$ denote the pullback of $L_0$ to $S$, so we have
$\sigma^*L\cong L$, canonically.
The identification
\[ L\otimes \sigma^*L\cong L^{\otimes 2}\cong \cO_S \]
determines a $G$-torsor, which arises from the
pair of $\Z/2\Z$-torsors under the identification of the subgroup
of $G$, generated the $2\times 2$-permutation matrices and $\mu_2$,
with the Klein four-group.
\end{exam}

\subsection{Root stacks and conic bundles}
\label{sec.rootconic}
We recall and introduce notation for three flavors of root stack:
\begin{itemize}
\item[(i)] root stack $\sqrt{L}$
of a line bundle $L$ on a smooth algebraic variety $S$ over $k$,
a smooth Deligne-Mumford stack that is Zariski locally over $S$ isomorphic
to a product with the classifying stack $B\mu_2$
\cite[Defn.\ 2.2.6]{cadman} \cite[\S B.1]{AGV};
\item[(ii)] 
root stack $\sqrt{(S,D)}$ of a divisor $D\subset S$,
a Deligne-Mumford stack, locally for $D$ defined by
$f=0$ on affine open $\Spec(A)$, given by 
$$
[\Spec(A[z]/(z^2-f))/\mu_2],
$$ 
where
$\mu_2$ acts by scalar multiplication on $z$ \cite[Defn.\ 2.2.1]{cadman} \cite[\S B.2]{AGV};
\item[(iii)] 
iterated root stack 
$$
\sqrt{(S,\{D,D'\})}=\sqrt{(S,D)}\times_S\sqrt{(S,D')}
$$
of a pair of divisors $D$, $D'\subset S$
\cite[Defn.\ 2.2.4]{cadman} (or more generally of an arbitrary collection
$\{D_1,\dots,D_N\}$ of divisors on $S$).
\end{itemize}
The root stack in (ii) is smooth if $D$ is smooth, and in (iii) is smooth if
$D$ and $D'$ are smooth and intersect transversally
(or more generally if $D_1\cup \dots\cup D_N$ is a simple normal crossing divisor).
It is useful to be aware of a fourth flavor, defined by
Matsuki and Olsson \cite[Thm.\ 4.1]{matsukiolsson}, which for a
normal crossing divisor $D\subset S$ is a smooth Deligne-Mumford stack
with morphism to $S$ that \'etale locally over $S$ is of the form described in (iii).

Let $S$ be a smooth algebraic variety over $k$ and
$D\subset S$, $D=D_1\cup\dots\cup D_N$ a simple normal crossing divisor
with $D_i\cap D_{i'}\cap D_{i''}=\emptyset$ for distinct $i$, $i'$, $i''$.
One way that a standard conic bundle over $S$ may arise is
by modifying a smooth $\PP^1$-fibration
\begin{equation}
\label{eqn.P}
P\to \sqrt{(S,\{D_1,\dots,D_N\})}.
\end{equation}
\emph{We assume that the corresponding class in the Brauer group $\Br(S\setminus D)$
is nontrivial, obstructed for every $i$
from extending to a neighborhood of the generic point of $D_i$ by the
class of a nontrivial degree $2$ covering $\widetilde{D}_i\to D_i$ that is
furthermore assumed to be ramified over
the generic point of every component of $D_i\cap D_{i'}$ for all $i'\ne i$.}
Following \cite{oesinghaus} and appealing to
\cite[Rmk.\ 2.3]{kreschtschinkelmodels}, associated with \eqref{eqn.P}
there is a standard conic bundle $V\to S$.
The construction over $S\setminus D^{\mathrm{sing}}$ is that of
\cite[Prop.\ 3.1]{kreschtschinkelmodels}: blow up the locus in
$P$ with $\mu_2$-stabilizer, contract, and descend to
$S\setminus D^{\mathrm{sing}}$.
There is a unique extension to a standard conic bundle over $S$.

\begin{exam}
\label{exa.conicbundleA2}
When $S=\A^2$ and $D=D_1\cup D_2$, union of coordinate axes, we have
\[ \sqrt{(S,\{D_1,D_2\})}\cong [\A^2/\mu_2\times \mu_2] \]
where, writing $\kappa$, $\lambda$ for coordinates on $\A^2$, the
action of the first factor $\mu_2$ is by scalar multiplication on $\kappa$
and of the second factor $\mu_2$, by scalar multiplication on $\lambda$;
here, $S$ is identified with $\Spec(k[\kappa^2,\lambda^2])$.
Let
\[ P:=[\A^2\times \PP^1/\mu_2\times \mu_2], \]
where the action of the first factor $\mu_2$ on $\PP^1$ is by
permutation of the coordinates and of the second factor $\mu_2$, by
scalar multiplication on one of the coordinates.
The construction described above leads to $V$, defined
inside $S\times \PP^2$ with projective coordinates $x$, $y$, $z$ by
\[ \kappa^2 x^2+\lambda^2y^2-z^2=0. \]
The smooth $\PP^1$-fibration and conic bundle are identified over
$S\setminus D$ by
\[ (\kappa,\lambda,(\alpha:\beta))\mapsto
(\kappa^2,\lambda^2,(\lambda(-\alpha^2+\beta^2):2\kappa\alpha\beta:\kappa\lambda(\alpha^2+\beta^2))). \]
\end{exam}

\subsection{A useful conic bundle}
\label{sec.useful}
We work out in detail one instance of the construction of
Section \ref{sec.rootconic}.
Let $\Z/2\Z$ act on $\PP^1\times \PP^1$ by swapping the factors and consider
\[ W:=[\PP^1\times \PP^1/(\Z/2\Z)]. \]
Since $\PP^1\times \PP^1$ is, by
\[ ((u':v'),(u'':v''))\mapsto (\frac{1}{2}(u'v''+v'u''):u'u'':v'v''), \]
a double cover of $\PP^2$ branched over the conic in $\PP^2$ defined by
$t^2=uv$, we may view $W$ as the root stack of $\PP^2$ along the conic.
Let $H$ be the line $v=0$ in $\PP^2$ and
\[ X:=W\times_{\PP^2}\sqrt{\cO_{\PP^2}(-H)}. \]
The root stack $\sqrt{\cO_{\PP^2}(-H)}$ carries a tautological line bundle
whose tensor square is identified with the pulllback of $\cO_{\PP^2}(-H)$.
The tautological line bundle, pulled back to $X$, will be denoted by $M$;
its pullback to the degree $2$ \'etale cover $(\PP^1\times \PP^1)\times_{\PP^2}\sqrt{\cO_{\PP^2}(-H)}$ will be denoted by $M'$.
The pre-image of $H$ in $\PP^1\times \PP^1$ is a union $H'\cup H''$,
where $H'$ is defined by $v'=0$ and $H''$, by $v''=0$.
Now we let
\[ L:=M'\otimes_{\cO_{\PP^1\times \PP^1}} \cO_{\PP^1\times \PP^1}(H'). \]
So $\sigma^*L\cong M'\otimes_{\cO_{\PP^1\times \PP^1}} \cO_{\PP^1\times \PP^1}(H'')$,
where $\sigma$ denotes the covering involution, and
\[ L\otimes \sigma^*L\cong M'^{\otimes 2}\otimes_{\cO_{\PP^1\times \PP^1}}
\cO_{\PP^1\times \PP^1}(H'+H'')\cong
\cO_{(\PP^1\times \PP^1)\times_{\PP^2}\sqrt{\cO_{\PP^2}(-H)}}. \]
These data determine a $G$-torsor over $X$,
hence via the representation $\omega$ a smooth $\PP^1$-fibration
\[ F\to X. \]
By base change to the line bundle $M$ we obtain
\begin{equation}
\label{eqn.PtoM}
P:=F\times_XM\to M.
\end{equation}

\begin{prop}
\label{prop.P}
We write $t_0$, $u_0$, $v_0$ for affine coordinates on $\A^3$ and view
the blow-up $B\ell_0\A^3$ as subvariety of $\A^3\times \PP^2$, where $\PP^2$
has projective coordinates $t$, $u$, $v$.
\begin{itemize}
\item[(i)] Let $D\subset \A^3$ be the divisor $u_0v_0-t_0^2=0$.
Then 
\[M\cong \sqrt{(B\ell_0\A^3,\{D',E\})}, \]
where $D'$ denotes the
proper transform of $D$ and $E$, the exceptional divisor.
\item[(ii)] Let us write $\theta$ for the canonical section of
$\cO_{B\ell_0\A^3}(-1)$, vanishing on $E$.
The conic bundle construction of Section \ref{sec.rootconic} applied to
\eqref{eqn.PtoM} yields the conic bundle
\[ V\subset \PP\big(\cO_{B\ell_0\A^3}(1)\oplus
\cO_{B\ell_0\A^3}\oplus \cO_{B\ell_0\A^3}\big) \]
defined by the symmetric morphism of vector bundles
\begin{align*}
\big(\cO_{B\ell_0\A^3}(1)\oplus\cO_{B\ell_0\A^3}\oplus \cO_{B\ell_0\A^3}\big)&
\otimes \cO_{B\ell_0\A^3}(-1)
\stackrel{\begin{pmatrix} \theta & 0 & 0 \\ 0 & -u & -t \\ 0 & -t & -v \end{pmatrix}}
{\relbar\joinrel\relbar\joinrel\relbar\joinrel\relbar\joinrel\relbar\joinrel\relbar\joinrel\longrightarrow} \\
&\cO_{B\ell_0\A^3}(-1)\oplus
\cO_{B\ell_0\A^3}\oplus \cO_{B\ell_0\A^3}.
\end{align*}
\end{itemize}
\end{prop}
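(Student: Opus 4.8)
The plan is to reduce both parts to explicit local computations on the blow-up $B\ell_0\A^3$, using the charts coming from the projective coordinates $t$, $u$, $v$ on the $\PP^2$-factor. For part (i), I would first identify $M$ as a root stack by tracking the tautological line bundle $M$ through the chain of constructions: starting from $W=[\PP^1\times\PP^1/(\Z/2\Z)]$ viewed as $\sqrt{(\PP^2,\{uv=t^2\})}$, then pulling back along $\sqrt{\cO_{\PP^2}(-H)}$, then base-changing to the line bundle $M$ over $X$. The key point is that passing to the total space of the tautological line bundle of $\sqrt{\cO_{\PP^2}(-H)}$ — i.e.\ extracting a square root of the section cutting out $H$ — has the effect, after blowing up, of replacing the conic $uv=t^2$ in $\PP^2$ by the pair consisting of its proper transform $D'$ and the exceptional divisor $E$ in $B\ell_0\A^3$. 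Concretely I would write $\A^3$ with coordinates $t_0,u_0,v_0$ so that $\A^2\subset\PP^2$ sits as $v=1$ and the cone over the conic becomes $u_0v_0-t_0^2=0$; the square-root-of-$H$ direction supplies the third coordinate, and blowing up the origin separates the two root-stack divisors. Checking that the iterated root stack $\sqrt{(B\ell_0\A^3,\{D',E\})}$ is exactly $M$ is then a matter of comparing the two $\mu_2$-quotient presentations chart by chart, since both sides are Zariski-locally products of $S$ with $B\mu_2\times B\mu_2$ away from the locus where $D'$ meets $E$, and both are smooth there by the transversality of $D'$ and $E$.

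For part (ii), I would apply the recipe of Section~\ref{sec.rootconic} to the $\PP^1$-fibration \eqref{eqn.PtoM}. The $G$-torsor over $X$ is built in Section~\ref{sec.useful} from the $\Z/2\Z$-torsor $\PP^1\times\PP^1\to\PP^2$, the line bundle $L=M'\otimes\cO(H')$, and the trivialization $L\otimes\sigma^*L\cong\cO$; via the projective representation $\omega$ this produces $F\to X$, and base change along $M\to X$ gives $P\to M$. Now I would run the "blow up the $\mu_2$-stabilizer locus, contract, descend" procedure of \cite[Prop.~3.1]{kreschtschinkelmodels} in the two natural affine charts of $B\ell_0\A^3$. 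In each chart the $\PP^1$-fibration $P$ is the one attached to the pair of divisors $(D',E)$ exactly as in Example~\ref{exa.conicbundleA2} (with $D_1=D'$, $D_2=E$), so the local model is $\kappa^2x^2+\lambda^2y^2-z^2=0$; here $\lambda$ plays the role of the local equation of $E$, namely the section $\theta$ of $\cO(-1)$, while $\kappa$ corresponds to the local equation of $D'$. Re-homogenizing the conic $\theta^2 x^2 - (\text{equation of }D') z^2 = \dots$ with respect to the line-bundle twists $\cO(1)\oplus\cO\oplus\cO$, and using $uv-t^2$ for the (global, up to the twist) equation of $D'$ pulled back to $B\ell_0\A^3$, one reads off the symmetric matrix $\mathrm{diag}\big(\theta,\ -\!\begin{pmatrix}u&t\\ t&v\end{pmatrix}\big)$. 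The twists are pinned down by matching the degree of $\theta$ (a section of $\cO(-1)$) against the off-diagonal block (sections of $\cO$), which forces the first summand to be $\cO(1)$; the signs and the precise off-diagonal entries come from the explicit isomorphism between the $\PP^1$-fibration and the conic bundle over $S\setminus D$ recorded at the end of Example~\ref{exa.conicbundleA2}.

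The main obstacle I anticipate is not any single computation but the bookkeeping of line-bundle twists and gluing across the charts: one must verify that the local conic bundles produced in the $u$-chart and the $v$-chart of $B\ell_0\A^3$ glue to a globally defined symmetric morphism of vector bundles with the stated twists, and that this global object restricts correctly over the exceptional divisor $E$ (where the conic degenerates along $\theta=0$) and over $D'$. In particular I would take care that the "blow up and contract" step is performed compatibly with the $\mu_2\times\mu_2$-action in the chart where $D'$ meets $E$, so that the descent to $B\ell_0\A^3$ is well-defined there; this is exactly the point where the transversality from part (i) and the ramification hypothesis on $\widetilde{D}\to D$ enter. Once the local-to-global gluing is in hand, the identification of $V$ with the vanishing locus of the displayed matrix in $\PP(\cO(1)\oplus\cO\oplus\cO)$ is immediate, and uniqueness of the extension to a standard conic bundle over all of $B\ell_0\A^3$ follows from the uniqueness statement quoted in Section~\ref{sec.rootconic}.
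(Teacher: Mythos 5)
Your proposal follows essentially the same route as the paper: part (i) via the identification of $B\ell_0\A^3$ with the total space of $\cO_{\PP^2}(-H)$ together with base-change compatibility of root stacks and the fact that the total space of the tautological bundle on a root stack of a line bundle is the root stack of the zero section, and part (ii) via a two-chart computation reducing to Example \ref{exa.conicbundleA2} followed by the gluing of projective coordinates (which is where the paper spends most of its effort, via the substitution $\tilde{z}_i=z_i-t_ix_i$ and the transition relation between the charts). One small caution: $\sqrt{\cO_{\PP^2}(-H)}$ is the root stack of a \emph{line bundle}, not of the divisor $H$, so it does not ``extract a square root of the section cutting out $H$''; only the zero section of the total space, i.e.\ $E$, becomes a branch divisor of $M$ --- your stated conclusion is nonetheless the correct one.
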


\begin{proof}
Assertion (i) follows directly from the identification of $B\ell_0\A^3$ with the
total space of $\cO_{\PP^2}(-H)$ and two general facts about
root stacks: the root stack constructions (i)--(iii) of Section \ref{sec.rootconic}
commute with base change,
and the total space of the tautological line bundle on the root stack of a line bundle
is isomorphic to the root stack of the zero section as divisor in the
original line bundle.

It suffices to identify the outcome of the conic bundle construction of
Section \ref{sec.rootconic} with the conic bundle defined by the
matrix in (ii) over the complement of a curve in $B\ell_0\A^3$.
We do this over the union of two affine charts, one defined by the
nonvanishing of $v$ and the other, by the nonvanishing of $u$.

\emph{Chart 1}: coordinates $v_0$, $t_1$, $u_1$ with
$t_0=v_0t_1$ and $u_0=v_0u_1$.
The corresponding open substack of $M$ is isomorphic to
\[ [\A^2/(\Z/2\Z)]\times [\A^1/(\Z/2\Z)], \]
action by swapping the coordinates and multiplication by $-1$ of the coordinate
on respective factors.
Denoting coordinates by $u'$, $u''$, respectively $\lambda_1$, we have
\[ v_0=\lambda_1^2,\qquad t_1=\frac{1}{2}(u'+u''),\qquad u_1=u'u''. \]
We are in the situation of Example \ref{exa.secondZ2torsor},
where the first $\Z/2\Z$-torsor is $\A^2\times [\A^1/(\Z/2\Z)]$
and the second $\Z/2\Z$-torsor is $[\A^2/(\Z/2\Z)]\times \A^1$.
So the corresponding open substack of $P$ is isomorphic to
\begin{equation}
\label{eqn.ofP}
[\A^2\times \A^1\times \PP^1/(\Z/2\Z\times \Z/2\Z)],
\end{equation}
with action of first $\Z/2\Z$ factor by swapping the coordinates of $\A^2$ and
swapping the coordinates of $\PP^1$, and
action of second $\Z/2\Z$ factor by multiplication by $-1$ of the coordinate of $\A^1$
and multiplication by $-1$ of one of the coordinates of $\PP^1$.

\emph{Chart 2}: coordinates $u_0$, $t_2$, $v_2$ with
$t_0=u_0t_2$ and $v_0=u_0v_2$.
The corresponding open substacks of $M$ and of $P$ admit the
same description as in Chart 1.
With coordinates $v'$, $v''$ and $\lambda_2$ on respective factors of
$M\cong [\A^2/(\Z/2\Z)]\times [\A^1/(\Z/2\Z)]$, we have
\[ u_0=\lambda_2^2,\qquad t_2=\frac{1}{2}(v'+v''),\qquad v_2=v'v''. \]

\emph{Transition between charts}:
Given a $k$-scheme $T$, the data of a $T$-valued point of Chart 1
consist of a $\Z/2\Z$-torsor $S\to T$ with equivariant map
$S\to \A^2$ and a second $\Z/2\Z$-torsor $S'\to T$ with
equivariant map $S'\to \A^1$.
To land in the overlap with Chart 2 the morphism $S\to \A^2$ should
have image contained in $(\A^1\setminus \{0\})^2$.
The product of coordinates is invariant and thus determines an invertible
function $f$ on $T$.
We denote by $\sqrt{f}\to T$ the associated degree $2$ \'etale cover
and combine this with $S'$ to obtain $\Z/2\Z$-torsor
\[ \widetilde{S}':=S'\times^{\Z/2\Z} \sqrt{f}. \]
Then, with notation $\mathrm{mul}$ and $\mathrm{inv}$ for multiplication
and multiplicative inverse, respectively,
the Chart 2 data consist of $\Z/2\Z$-torsors and equivariant maps
\[ S\to T\qquad\text{with}\qquad S\to (\A^1\setminus \{0\})^2
\stackrel{\mathrm{inv}\times\mathrm{inv}}\longrightarrow (\A^1\setminus \{0\})^2
\subset \A^2 \]
and
\[ \widetilde{S}'\to T\quad\text{with}\quad
\widetilde{S}'\to \A^1\quad\text{induced by}\quad
S'\times\sqrt{f}\to \A^1\times \G_m\stackrel{\text{mul}}\to \A^1. \]

\emph{Conic bundle}:
We introduce new coordinate $\kappa_1=(1/2)(-u'+u'')$ on Chart 1 and
$\kappa_2=(1/2)(-v'+v'')$ on Chart 2.
On Chart 1 we use $t_1$, $\kappa_1$ in place of $u'$, $u''$, and
on Chart 2 we use $t_2$, $\kappa_2$ in place of $v'$, $v''$.
We have
\[ u_1=t_1^2-\kappa_1^2\qquad\text{and}\qquad v_2=t_2^2-\kappa_2^2. \]
Performing the change of coordinates leads to description of open substack
of $M$ as
\[ \A^1\times [\A^2/(\Z/2\Z\times \Z/2\Z)], \]
with coordinates $t_i$, respectively $\kappa_i$, $\lambda_i$
on Chart $i$ for $i\in \{1,2\}$.
For the open substack of $P$ over each chart, there is simply an extra
factor $\PP^1$ as in \eqref{eqn.ofP}.
We have, over each chart, exactly the situation of
Example \ref{exa.conicbundleA2} (up to an extra $\A^1$-factor), and
the conic bundle is thereby defined by an explicit equation in
projective coordinates $x_i$, $y_i$, $z_i$.
With the change of coordinates
\[ \tilde{z}_i=z_i-t_ix_i \]
we obtain the equation
\[ -u_1x_1^2+v_0y_1^2-\tilde{z}_1^2-2t_1x_1\tilde{z}_1=0,
\quad\text{resp.}\quad
-v_2x_2^2+u_0y_2^2-\tilde{z}_2^2-2t_2x_2\tilde{z}_2=0, \]
with map
\[ (t_i,\kappa_i,\lambda_i,(\alpha_i:\beta_i))\mapsto
(\lambda_i(-\alpha_i^2+\beta_i^2):2\kappa_i\alpha_i\beta_i:
(\kappa_i+t_i)\lambda_i\alpha_i^2+(\kappa_i-t_i)\lambda_i\beta_i^2). \]
Writing $\gamma$ for a square root of $u_1=u'u''$, we have
\[
\lambda_2=\gamma\lambda_1\qquad\text{and}\qquad
(\alpha_2:\beta_2)=(u'^{-1}\gamma \alpha_1:\beta_1),
\]
and it follows that the projective coordinates on the two charts are related by
\begin{equation}
\label{eqn.projectivechange}
(x_2:y_2:\tilde{z}_2)=(\tilde{z}_1:u_1^{-1}y_1:x_1).
\end{equation}
The conic bundle defined by the matrix in (ii) yields, on the
two charts, precisely the equations obtained above with
relation \eqref{eqn.projectivechange} between projective coordinates.
\end{proof}

Given a smooth variety, the \emph{elementary transformation}
of a projectivized vector bundle along a section
over a divisor is the outcome of blowing up the section and
contracting to the projectivization of a vector bundle, whose dual is
in a natural way a subsheaf of the dual of the
original vector bundle \cite{maruyama}.

\begin{prop}
\label{prop.elementarytransformation}
We adopt the notation of Proposition \ref{prop.P}.
The elementary transformation of
$$
\PP(\cO_{B\ell_0\A^3}(1)\oplus \cO_{B\ell_0\A^3}\oplus \cO_{B\ell_0\A^3})
$$
along the section $\PP(\cO_E(1)\oplus 0\oplus 0)$ over $E$ is
$B\ell_0\A^3\times \PP^2$.
Writing $x$, $y$, $z$ for projective coordinates on the $\PP^2$ factor,
the conic bundle $V\to B\ell_0\A^3$ transforms to the conic bundle in $B\ell_0\A^3\times \PP^2$
defined by
\[ x^2-u_0y^2-2t_0yz-v_0z^2=0, \]
obtained by base-change from the conic bundle
with this defining equation over $\A^3$.
\end{prop}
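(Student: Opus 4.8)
The plan is to verify the claimed identity by an explicit local computation in the two affine charts of $B\ell_0\A^3$ introduced in the proof of Proposition~\ref{prop.P}, together with a direct description of the elementary transformation. Since everything in sight is natural with respect to the morphism $B\ell_0\A^3\to \A^3$, and the matrix in Proposition~\ref{prop.P}(ii) is itself obtained by pulling back the symmetric bilinear form $\mathrm{diag}(1)\oplus\begin{pmatrix}-u&-t\\-t&-v\end{pmatrix}$ and twisting by $\theta$, it suffices to understand what the elementary transformation does to this twisted form along $E$ and along the section $\PP(\cO_E(1)\oplus 0\oplus 0)$.

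First I would recall the local model of the elementary transformation: on a smooth variety $Y$ with divisor $E$, blowing up a section of $\PP(\cE)$ lying over $E$ and contracting the other ruling replaces $\cE$ by the kernel sheaf $\cE'$ of $\cE\to \cE|_E\to \cQ$, where $\cQ$ is the quotient corresponding to the section; concretely, in our situation $\cE=\cO(1)\oplus\cO\oplus\cO$, the section is cut out by $y=z=0$ over $E$, and the elementary transformation replaces the summand $\cO(1)$ (the one carrying the coordinate $x$) by $\cO(1)\otimes\cO(-E)\cong\cO$, while twisting $y$ and $z$ trivially; thus $\cE'\cong\cO\oplus\cO\oplus\cO$ and $\PP(\cE')\cong B\ell_0\A^3\times\PP^2$, with new projective coordinates $x,y,z$ where the new $x$ is $\theta\cdot x_{\mathrm{old}}$ (i.e.\ the old $x$-coordinate multiplied by the section vanishing on $E$). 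Under this substitution, the $(1,1)$-entry of the twisted matrix, which was $\theta\otimes\theta^{-1}$ applied to $\cO(1)$ — i.e.\ contributed the term $\theta x_{\mathrm{old}}^2$ up to the twist — becomes simply $x^2$, while the $2\times 2$ block $\begin{pmatrix}-u&-t\\-t&-v\end{pmatrix}$ twisted by $\theta^{-1}$ transforms: after rescaling $y,z$ by $\theta^{-1}$ the entries $-u,-t,-v$ on $B\ell_0\A^3$ pull back to $-u_0,-t_0,-v_0$ on $\A^3$ (this is exactly the content of the relation $u=u_0/\theta^2$ etc.\ in the charts, since $u,t,v$ are the $\PP^2$-coordinates on $B\ell_0\A^3\subset\A^3\times\PP^2$ and $\theta$ is the degree-one section). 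I would carry this out in Chart~1 with coordinates $v_0,t_1,u_1$ and in Chart~2 with $u_0,t_2,v_2$, checking that the transition \eqref{eqn.projectivechange} of Proposition~\ref{prop.P} becomes the trivial transition for the product $B\ell_0\A^3\times\PP^2$, and that in both charts the conic equation becomes $x^2-u_0y^2-2t_0yz-v_0z^2=0$ after completing the square in the appropriate way (note $-u_1x_1^2+v_0y_1^2-\tilde z_1^2-2t_1x_1\tilde z_1$, after the elementary transformation multiplies $x_1$ by the local equation of $E$ and one relabels, is exactly the restriction of the displayed form).

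Finally I would observe that the displayed equation $x^2-u_0y^2-2t_0yz-v_0z^2=0$ involves only the coordinates $u_0,t_0,v_0$ on $\A^3$ and no blow-up coordinates, hence is pulled back from $\A^3$; since the elementary transformation is an isomorphism away from $E$ and $B\ell_0\A^3\to\A^3$ is an isomorphism away from the origin, and a conic bundle over a smooth variety is determined by its restriction to the complement of a closed subset of codimension $\ge 2$, the identification holds globally. The main obstacle — really the only delicate point — is bookkeeping the twists: tracking precisely how the tautological line bundle $\cO(-1)$ of the $\PP^2$-bundle, the line bundle $\cO(E)$, and the section $\theta$ interact so that the elementary transformation converts the twisted symmetric form of Proposition~\ref{prop.P}(ii) into the untwisted form $x^2-u_0y^2-2t_0yz-v_0z^2$, rather than getting an equation that differs by an extra factor of $\theta$ or its square. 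Once the charts are set up carefully this is a short verification, matching the coordinate change \eqref{eqn.projectivechange} against the trivialization of $\PP(\cO\oplus\cO\oplus\cO)$ on the overlap.
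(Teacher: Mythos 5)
Your approach coincides with the paper's: the elementary transformation untwists the first summand so that $\mathcal{B}'\cong\cO_{B\ell_0\A^3}^{\oplus 3}$ (the paper phrases this via the exact sequence $0\to\mathcal{B}'^\vee\to\mathcal{B}^\vee\to\cO_E(-1)\to 0$), and the displayed equation is then established by exactly the direct chart computation you outline. One bookkeeping correction to the delicate point you flag: since $\theta$ is a section of $\cO_{B\ell_0\A^3}(-1)$ vanishing on $E$, one has $\cO_{B\ell_0\A^3}(-1)\cong\cO(E)$, so the untwist is $\cO_{B\ell_0\A^3}(1)\otimes\cO(E)\cong\cO$ rather than $\cO(1)\otimes\cO(-E)$, and the relation between coordinates is $u\,\theta=u_0$ (one power of $\theta$, not two); with $x_{\mathrm{new}}=\theta\, x_{\mathrm{old}}$ the form $\theta x^2-uy^2-2tyz-vz^2$ becomes $\theta^{-1}\bigl(x_{\mathrm{new}}^2-u_0y^2-2t_0yz-v_0z^2\bigr)$, which yields the stated equation after clearing the factor $\theta^{-1}$.
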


\begin{proof}
If $\mathcal{B}$ is the locally free coherent sheaf
$\cO_{B\ell_0\A^3}(1)\oplus \cO_{B\ell_0\A^3}\oplus \cO_{B\ell_0\A^3}$,
the elementary transformation applied to $\PP(\mathcal{B})$ with the indicated section
over $E$ yields $\PP(\mathcal{B}')$, where the dual $\mathcal{B}'^\vee$ sits in
an exact sequence
\[ 0\to \mathcal{B}'^\vee\to \mathcal{B}^\vee\to \cO_E(-1)\to 0. \]
So, $\mathcal{B}'\cong \cO_{B\ell_0\A^3}^3$.
A direct computation establishes the remaining assertion.
\end{proof}

Returning to the $G$-torsor over $X$ we are also interested in the
vector bundles associated with the representations given in Section \ref{sec.representations}.
Notation $R$ with subscript will be used for the associated vector bundle,
where the subscript indicates the representation.
When the generic stabilizer $\Z/2\Z$ of $X$ acts trivially on fibers, the
vector bundle descends to $W$ and the notation $Q$ with subscript will
be used for the vector bundle on $W$.

\begin{lemm}
\label{lem.rho2onX}
Let $\Delta$ denote the diagonal in $\PP^1\times \PP^1$ and $r\in \Delta$,
and let us use the same notation for the corresponding closed substacks of $W$,
or of $X$.
\begin{itemize}
\item[(i)] The line bundle $R_\chi$ is the pullback of line bundle $Q_\chi$ on $W$, with
\[ Q_\chi\cong \cO_W(\Delta-H'-H'') \]
and characterization up to isomorphism as nontrivial, $2$-torsion in $\Pic(W)$.
\item[(ii)] The vector bundle $R_{\rho_2}$ fits in an exact sequence
\[ 0\to M\to R_{\rho_2}\to M^\vee\otimes R_\chi\to
M^\vee\otimes R_\chi\otimes \cO_{\{r\}}\to 0 \]
of coherent sheaves on $X$.
\end{itemize}
\end{lemm}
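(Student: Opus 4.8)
The plan is to compute the two associated vector bundles directly from the explicit model of the $G$-torsor over $X$ built in Section \ref{sec.useful}, using the fact (already recorded before Proposition \ref{prop.elementarytransformation}) that $\rho_3 = \chi \oplus \rho_2$ and that $\rho_2$ sits inside $\rho_4 = \rho_2^\vee \otimes \rho_2$. For part (i), recall that $R_\chi$ is the line bundle attached to the character $\chi = \det(\rho_2)$. By Example \ref{exa.secondZ2torsor}, the $G$-torsor here comes (away from the branch conic) from the \emph{pair} of $\Z/2\Z$-torsors: the double cover $\PP^1\times \PP^1 \to \PP^2$, and the square root of $\cO_{\PP^2}(-H)$. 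Chasing the block-diagonal description of $H\subset GL_2$ through the construction, $\det(\rho_2)$ depends only on the first $\Z/2\Z$-torsor, so $R_\chi$ is pulled back from $W$; and on $W$ the determinant of the rank-$2$ bundle descended from $L\oplus \sigma^*L$ is computed from $\cO(H')$ and $\cO(H'')$ together with the correction needed for the descent, giving $Q_\chi \cong \cO_W(\Delta - H' - H'')$. (The diagonal $\Delta$ enters because $\PP^1\times \PP^1 \to W$ is ramified along $\Delta$, so a line bundle on $W$ pulls back to an $\sigma$-anti-invariant bundle on $\PP^1\times\PP^1$ only after twisting by $\cO(\Delta)$.) That this class is the unique nontrivial $2$-torsion element of $\Pic(W)$ follows since $W$ is a root stack of $\PP^2$ along a smooth conic, whose Picard group is $\Z \oplus \Z/2\Z$ with the $2$-torsion generated by $\cO(\Delta - H' - H'')$; it is nontrivial because $H' + H''$ is not linearly equivalent to $\Delta$ on $\PP^1\times\PP^1$ (they have bidegrees $(1,1)$ and $(1,1)$ but differ in $\Pic$), which one checks directly, and one must verify the descent datum is the nontrivial one.

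For part (ii), the strategy is to write down $R_{\rho_2}$ as the rank-$2$ sheaf obtained by descent of $L \oplus \sigma^* L$ under the degree-$2$ \'etale cover $Y := (\PP^1\times\PP^1)\times_{\PP^2}\sqrt{\cO_{\PP^2}(-H)} \to X$, where, by the definitions in Section \ref{sec.useful}, $L = M' \otimes \cO(H')$ and $\sigma^* L = M' \otimes \cO(H'')$. The sub-line-bundle $M \hookrightarrow R_{\rho_2}$ is the image of the $\sigma$-invariant "sum" section $L \oplus \sigma^*L \to L\otimes\sigma^*L$-type contraction — more precisely, $M$ descends from the invariant part, and $M^\vee \otimes R_\chi$ is the quotient corresponding to the anti-invariant part (the $\det$ twist accounts for the $\chi$). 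The only subtlety is that this descent is not a direct sum: the gluing of $L$ and $\sigma^*L$ fails to be locally split exactly along the diagonal $\Delta \subset W$ (equivalently along the image of the ramification of $Y \to X$), and more precisely along the single chosen point $r \in \Delta$, which is why the quotient of $R_{\rho_2}$ by $M$ is not $M^\vee \otimes R_\chi$ but its subsheaf of colength one at $r$. Concretely I would work in a local chart around a point of $\Delta$ as in the charts of Proposition \ref{prop.P}: there the cover $Y\to X$ is given by adjoining a square root, $L$ and $\sigma^*L$ are trivialized, and the descended bundle is the module generated by $1$ and $\sqrt{f}$; the elementary transformation / root-stack bookkeeping then produces exactly the four-term exact sequence in the statement.

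I expect the main obstacle to be \emph{not} the generic computation — over $W \setminus \Delta$ and $X \setminus \Delta$ everything is an honest \'etale descent of line bundles — but rather the precise local behavior along the diagonal $\Delta$, i.e.\ controlling how the descent of $L \oplus \sigma^*L$ degenerates along the ramification locus of $Y \to X$ and pinning down that the discrepancy is concentrated at a single reduced point $r$ rather than along all of $\Delta$. This requires unwinding the root-stack structure of $\sqrt{\cO_{\PP^2}(-H)}$ pulled back to $\PP^1\times\PP^1$ and keeping careful track of $\mu_2$-weights, essentially the same chart-level analysis carried out in the proof of Proposition \ref{prop.P}, together with the identification of the tautological bundle $M$ and its restriction $\cO_E(1)$ to the exceptional divisor as in Proposition \ref{prop.elementarytransformation}. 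Once the local picture is nailed down, assembling it into the stated exact sequence of coherent sheaves on $X$ is formal.
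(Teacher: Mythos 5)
Your overall strategy for both parts coincides with the paper's (for (i): $\chi$ factors through the component group, so $R_\chi$ is pulled back from $W$ and is identified as the unique nontrivial $2$-torsion class in $\Pic(W)$; for (ii): descend $L\oplus\sigma^*L$, exhibit the diagonal copy of $M'$, and analyze the cokernel). But two steps as written are wrong or left unresolved, and both are exactly the load-bearing ones.

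First, in (i), your stated reason for nontriviality is false: $\Delta$ and $H'+H''$ do \emph{not} ``differ in $\Pic$'' of $\PP^1\times\PP^1$ --- both have bidegree $(1,1)$ and $\Pic(\PP^1\times\PP^1)\cong\Z^2$, so $\cO(\Delta-H'-H'')$ pulls back to the \emph{trivial} bundle on $\PP^1\times\PP^1$. The nontriviality of $Q_\chi$ is a purely stacky phenomenon, invisible on the double cover: it is detected by the $\mu_2$-stabilizer of $W$ at a point $r\in\Delta$, where the residual gerbe is a copy of $B\Z/2\Z$ and the torsor $\PP^1\times\PP^1\to W$ restricts to the nontrivial one, so $Q_\chi$ restricts to the nontrivial character. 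Your parenthetical ``one must verify the descent datum is the nontrivial one'' is the correct point, but it is the whole content of the step, not an afterthought, and the reason you offer in its place does not work. (Relatedly, your appeal to Example \ref{exa.secondZ2torsor} is misplaced: the $G$-torsor over $X$ does not arise from a pair of $\Z/2\Z$-torsors, since $L=M'\otimes\cO(H')$ is not pulled back from $X$; that example is only relevant to the restriction over $t=0$ later in the paper.)

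Second, in (ii), you correctly identify ``why the discrepancy is concentrated at a single reduced point $r$'' as the crux, but you leave it as an expected obstacle and, worse, misattribute its source to the diagonal $\Delta$ and ``the ramification of $Y\to X$'' --- the cover $(\PP^1\times\PP^1)\times_{\PP^2}\sqrt{\cO_{\PP^2}(-H)}\to X$ is a $\Z/2\Z$-torsor of stacks, hence \'etale, with no ramification. The actual mechanism is elementary and has nothing to do with $\Delta$ as a whole: the cokernel of the diagonal $M'\hookrightarrow M'\otimes(\cO(H')\oplus\cO(H''))$ has torsion precisely where the two defining sections of $\cO(H')$ and $\cO(H'')$ vanish simultaneously, i.e.\ at the single transverse intersection point $H'\cap H''$, which one may take to be $r$ (this is the ``no loss of generality'' at the start of the paper's proof). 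From there the paper identifies the reflexive hull of the cokernel as $M^\vee\otimes R_\chi$ by taking determinants (using $\det(L\oplus\sigma^*L)\cong\cO$, so $\det R_{\rho_2}\cong R_\chi$), and the four-term sequence follows; your ``invariant/anti-invariant part'' description is only a generic heuristic, since no such global splitting exists. So the proposal needs the nontriviality argument in (i) replaced and the local computation at $H'\cap H''$ actually carried out before it constitutes a proof.
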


\begin{proof}
There is no loss of generality in supposing $r$ to be the point of
intersection of $H'$ and $H''$.

The line bundle $R_\chi$ is the pullback of the nontrivial line bundle on
$B\Z/2\Z$, corresponding to the $\Z/2\Z$-torsor
\[ 
(\PP^1\times \PP^1)\times_{\PP^2}\sqrt{\cO_{\PP^2}(-H)}\to X.
\]
So $Q_\chi$ exists, corresponds analogously to the
$\Z/2\Z$-torsor $\PP^1\times \PP^1\to W$, and is $2$-torsion in $\Pic(W)$.
As well, $Q_\chi$ is nontrivial,
since we have at $r$ a copy of $B\Z/2\Z$ in $W$.
By the general description of the Picard group of a root stack
\cite[\S 3.1]{cadman},
$\Pic(W)$ is generated by classes pulled back from $\PP^2$ and the
class of $\cO_W(\Delta)$, a line bundle whose square is isomorphic to
the pullback of $\cO_{\PP^2}(2)$.
So there is a unique nontrivial $2$-torsion class in $\Pic(W)$, the
one indicated in the statement of (i).

For (ii), we start with the description of $R_{\rho_2}$ as
isomorphic after pullback to $W$ to $L\oplus \sigma^*L$
and characterized by descent by the isomorphism
$\sigma^*(L\oplus \sigma^*L)\cong L\oplus \sigma^*L$, as indicated
in the description of $\rho_2$ in Section \ref{sec.representations}.
In the present case, $L\oplus \sigma^*L$ contains a diagonal copy of $M'$,
yielding upon descent an injective homomorphism
\[ M\to R_{\rho_2} \]
of coherent sheaves on $X$.
The cokernel $K$ is locally free of rank $1$ away from $r$.
The reflexive hull $K^{\vee\vee}$ is a line bundle, whose isomorphism type
is identified, by taking determinants, as $M^\vee\otimes R_\chi$.
Furthermore, $K^{\vee\vee}/K\cong K^{\vee\vee}|_{\{r\}}$.
Combining this information, we obtain the $4$-term exact sequence given in the statement.
\end{proof}

\begin{prop}
\label{prop.coho}
The following cohomology groups vanish:
\[ H^i(X,R_{\rho_3}\otimes \cO_X(-H'-H'')) \qquad\text{and}\qquad
H^i(X,R_{\rho_3}\otimes M) \qquad\text{for all $i$}, \]
and $H^i(X,R_{\rho_3}\otimes (M^\vee)^{\otimes j})$ for all $i$, $j>0$.
\end{prop}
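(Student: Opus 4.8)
The plan is to deduce every assertion from classical cohomology vanishing on $\PP^2$ by pushing forward along $q\colon X\to W$ and then along the coarse space morphism $\bar q\colon W\to\PP^2$. \'Etale-locally each of these is the quotient by a group of order two, so the pushforwards are exact on quasi-coherent sheaves and $H^i(X,-)$ agrees with $H^i(\PP^2,\bar q_*q_*(-))$. The first step is to record the pushforward formulas we need. Along $q$: since $M$ is the tautological bundle of the root stack of a line bundle, $q_*(M^{\otimes j}\otimes q^*\mathcal L)=0$ for every line bundle $\mathcal L$ on $W$ and every odd $j$, while $q_*(M^{\otimes 2k})$ is the pullback to $W$ of $\cO_{\PP^2}(-k)$, because $M^{\otimes 2}$ is the pullback of $\cO_{\PP^2}(-H)$ by the construction of $X$. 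Along $\bar q$: $\bar q_*\cO_W\cong\cO_{\PP^2}$, and $\bar q_*Q_\chi$ is computed from Lemma~\ref{lem.rho2onX}(i), using that the pullback of $Q_\chi\cong\cO_W(\Delta-H'-H'')$ to $\PP^1\times\PP^1$ is the trivial bundle carrying the sign $\Z/2\Z$-action.

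Next, by Proposition~\ref{prop.trace} one has $R_{\rho_3}=R_\chi\oplus R_{\rho_2}$, reducing everything to the two summands; here $R_\chi=q^*Q_\chi$ is pulled back from $W$, while $R_{\rho_2}$ sits in the four-term exact sequence of Lemma~\ref{lem.rho2onX}(ii). The three families of twists are, up to a line bundle pulled back from $\PP^2$, an even or an odd power of $M$: $\cO_X(-H'-H'')=M^{\otimes 2}$, $M=M^{\otimes 1}$, and $(M^\vee)^{\otimes j}=M^{\otimes -j}$. Having determined the $\mu_2$-weight of each summand of $R_{\rho_3}$ for $q$, those twists which combine with that weight to an odd power of $M$ (tensored with a pullback from $W$) give cohomology that vanishes on $X$ for free, by $q_*=0$; the complementary, even, twists descend to $W$, and one is left to compute $H^i$ of a definite sheaf on $\PP^2$. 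For the $R_\chi$-summand this is the cohomology of a twist of $\bar q_*Q_\chi$, hence classical. For the $R_{\rho_2}$-summand I would split the four-term sequence into $0\to M\to R_{\rho_2}\to\cK\to 0$ and $0\to\cK\to M^\vee\otimes R_\chi\to M^\vee\otimes R_\chi\otimes\cO_{\{r\}}\to 0$, tensor by the twist at hand, and run the two long exact sequences; the outer terms are controlled by the $R_\chi$ computation together with the pushforward formulas for $M$, and the skyscraper enters in exactly one cohomological degree.

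I expect the skyscraper term $M^\vee\otimes R_\chi\otimes\cO_{\{r\}}$ to be the delicate point. The point $r$ lies on the ramification locus and on the gerbe, so it is a stacky point with nontrivial automorphism group, and one must pin down the character by which that group acts on the one-dimensional fibre of $M^\vee\otimes R_\chi$ there: this decides whether the global sections of the skyscraper vanish, and the connecting homomorphisms in the long exact sequences must then align so as to yield the asserted vanishing. More broadly, the whole argument is a bookkeeping of $\mu_2$-weights and of $\Z/2\Z$-equivariant structures across the two pushforwards. Once that is settled, all that remains is the vanishing of $H^*(\PP^2,\cO_{\PP^2}(m))$ for $m\in\{-1,-2\}$ and of $H^*(\PP^1\times\PP^1,\cO(a,b))$ for the finitely many bidegrees that occur, which is immediate.
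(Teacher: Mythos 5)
Your proposal follows essentially the same route as the paper: push forward along $X\to W\to\PP^2$ using exactness of the direct image, decompose $R_{\rho_3}=R_\chi\oplus R_{\rho_2}$ via Proposition~\ref{prop.trace}, feed the four-term sequence of Lemma~\ref{lem.rho2onX}(ii) through the twists by powers of $M$, and reduce to $H^*(\PP^2,\cO_{\PP^2}(m))$. The one point you flag as delicate --- the character of the stabilizer at $r$ on the fibre of the skyscraper --- is resolved in the paper exactly as you anticipate: the action is nontrivial, so the skyscraper contributes nothing to the direct image on $\PP^2$ and the long exact sequences collapse to the asserted vanishing.
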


\begin{proof}
The direct image functor on quasi-coherent sheaves is exact,
for the morphisms $X\to W$ and $W\to \PP^2$.
So, each of the above cohomology groups may be identified with a
cohomology group of the direct image sheaf on $\PP^2$.
Using Lemma \ref{lem.rho2onX},
the direct image of $R_{\rho_3}\otimes \cO_X(-H'-H'')$ on $W$ is
$\cO_W(\Delta-2H'-2H'')$, and on $\PP^2$ is $\cO_{\PP^2}(-2)$, which has
vanishing cohomology groups.
For $R_{\rho_3}\otimes M$ we have direct image on $W$ that sits in an
exact sequence between $\cO_W(-H'-H'')$ and
the kernel of restriction to the fiber over $r$ of $\cO_W(\Delta-H'-H'')$.
But the stabilizer action on fiber over $r$ is nontrivial, hence the
sheaf $\cO_W(\Delta-H'-H'')$ and its kernel of restriction to the fiber over $r$
have the same direct image on $\PP^2$.
Consequently, on $\PP^2$ we obtain the sheaf whose cohomology we need to compute
in the middle of a short exact sequence with
$\cO_{\PP^2}(-1)$ on the left and on the right.
The vanishing of cohomology groups follows.
The computation is similar
for $R_{\rho_3}\otimes (M^\vee)^{\otimes j}$ when $j$ is odd, except that
$\cO_{\PP^2}(i)$ with some $i>0$ occurs left and right, and we obtain
the vanishing of all higher cohomology groups.
The case that $j$ is even reduces as well to the vanishing of higher cohomology
of $\cO_{\PP^2}(i)$ for $i\ge 0$; we omit the details.
\end{proof}

\subsection{Deformation result}
\label{sec.defo}
Working over an
algebraically closed field $k$ of characteristic different from $2$,
we prove the existence of families of conic bundles,
where the special fiber is a standard conic bundle over a rational surface
with nodal discriminant curve, and in all other fibers the
discriminant curve is smooth.

\begin{theo}
\label{thm.defo}
Let $(B,b_0)$ be a pointed curve,
$\mathcal{S}\to B$ a smooth projective family of rational surfaces,
and $\mathcal{D}\subset \mathcal{S}$ an irreducible divisor,
smooth except for ordinary double points at
$q_1$, $\dots$, $q_r$ in the fiber $\mathcal{S}_{b_0}$, for some
positive integer $r$.
Let $\widetilde{\mathcal{D}}\to \mathcal{D}$ be a finite morphism
of degree $2$, \'etale over $\mathcal{D} \setminus \{q_1,\dots,q_r\}$,
with $\widetilde{\mathcal{D}}$ smooth.
We suppose, further, that $\mathcal{D}_{b_0}$ is connected and
smooth except for nodes at $q_1$, $\dots$, $q_r$, and
$\mathcal{D}_b$ is smooth for all $b\ne b_0$.
Then, after replacing $(B,b_0)$ by an \'etale neighborhood, there exists a
variety $\mathcal{V}$ fitting into a commutative diagram
\[
\xymatrix@C=8pt{
\mathcal{V}\ar[rr]^\varphi\ar[dr]_\psi && \mathcal{S} \ar[dl] \\
& B 
}
\]
such that $\varphi$ is a conic bundle (flat, projective, generically smooth,
all fibers are conics),
$\psi$ is smooth, and for every $b\in B$ the fiber $\mathcal{V}_b$ is
a standard conic bundle over $\mathcal{S}_b$ with corresponding
ramification data
$\widetilde{\mathcal{D}}_b\to \mathcal{D}_b$.
\end{theo}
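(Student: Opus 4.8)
The plan is to build $\mathcal{V}$ by the root-stack-and-conic-bundle recipe of Section~\ref{sec.rootconic}, carried out in families, using the local models of Propositions~\ref{prop.P} and~\ref{prop.elementarytransformation} to understand what happens near the nodes. First I would dispose of a soft point: the covering $\widetilde{\mathcal{D}}\to \mathcal{D}$, being ramified exactly over $q_1,\dots,q_r$, corresponds to a line bundle with a square-root structure, hence, by the discussion following Example~\ref{exa.secondZ2torsor}, to a $G$-torsor over the iterated root stack $\sqrt{(\mathcal{S},\mathcal{D})}$ away from the $q_i$; more precisely one should pass to the Matsuki--Olsson stack of the pair $(\mathcal{S},\mathcal{D})$ (the fourth flavor mentioned in Section~\ref{sec.rootconic}), since $\mathcal{D}$ is not a simple normal crossing divisor but acquires a node in the fiber $\mathcal{S}_{b_0}$. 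Over the locus where $\mathcal{D}$ is smooth this is just $\sqrt{(\mathcal{S},\mathcal{D})}$ and the torsor, via the projective representation $\omega$, gives a smooth $\PP^1$-fibration $P$ there; then applying the construction of \cite[Prop.~3.1]{kreschtschinkelmodels} (blow up the $\mu_2$-stabilizer locus, contract, descend) produces a standard conic bundle over $\mathcal{S}\setminus\{q_1,\dots,q_r\}$.

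The heart of the matter is to extend this conic bundle across the finitely many points $q_i\in\mathcal{S}_{b_0}$, coherently in the family. Here I would argue \'etale-locally: near each $q_i$, after an \'etale base change on $B$ and an \'etale-local trivialization, the data $(\mathcal{S},\mathcal{D},\widetilde{\mathcal{D}})$ look like the universal model $\A^3\supset D=\{u_0v_0-t_0^2=0\}$ with its canonical double cover ramified over the origin, fibered over $B=\A^1$ with coordinate $v_0$ (or whichever of $u_0,v_0$ cuts out $b_0$)---this is exactly the input of Proposition~\ref{prop.P}, whose blow-up $B\ell_0\A^3$ is the relevant root stack and whose output conic bundle, after the elementary transformation of Proposition~\ref{prop.elementarytransformation}, is the honest conic bundle $x^2-u_0y^2-2t_0yz-v_0z^2=0$ in $B\ell_0\A^3\times\PP^2$, pulled back from $\A^3$. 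Crucially this local model \emph{is} already a standard conic bundle, its total space is smooth, it is flat and projective over $\A^3$, and it agrees over the punctured neighborhood with the construction of the previous paragraph; so it provides the desired local extension. The remaining task is to glue these local models to the conic bundle already defined over $\mathcal{S}\setminus\{q_1,\dots,q_r\}$. This is where the cohomology vanishing of Proposition~\ref{prop.coho} enters: the ambiguity in extending---equivalently, the obstruction to, and the choices in, descending the $\PP^1$-fibration/conic bundle across $q_i$ and patching---is controlled by the cohomology of the vector bundles $R_{\rho_3}\otimes M^{\otimes j}$ etc.\ associated to the torsor, and Proposition~\ref{prop.coho} says these groups vanish, so the local and global pieces can be matched and the extension over all of $\mathcal{S}$ exists and is unique. (One uses uniqueness to know the patchings on overlaps are automatic.)

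It remains to check that the resulting $\varphi\colon\mathcal{V}\to\mathcal{S}$ has the asserted properties. Flatness, projectivity, and generic smoothness of $\varphi$, together with ``all fibers are conics,'' hold because they hold in the local model (Proposition~\ref{prop.P}(ii), Proposition~\ref{prop.elementarytransformation}) and over $\mathcal{S}\setminus\{q_i\}$ by the construction of Section~\ref{sec.rootconic}; standardness is likewise local. Smoothness of $\psi=(\varphi$ followed by $\mathcal{S}\to B)$: away from the fibers of $\varphi$ over the $q_i$ this is immediate since $\mathcal{S}\to B$ is smooth and $\varphi$ has lci conic fibers, and over the $q_i$ it follows from the explicit smoothness of the total space of the model $x^2-u_0y^2-2t_0yz-v_0z^2=0$ over $B$. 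Finally, that the fiber $\mathcal{V}_b$ is a standard conic bundle over $\mathcal{S}_b$ with ramification data $\widetilde{\mathcal{D}}_b\to\mathcal{D}_b$: for $b\ne b_0$ the curve $\mathcal{D}_b$ is smooth, $\sqrt{(\mathcal{S}_b,\mathcal{D}_b)}$ is a smooth stack, and restricting the torsor gives precisely the conic bundle attached to $\widetilde{\mathcal{D}}_b\to\mathcal{D}_b$ by Section~\ref{sec.rootconic}; for $b=b_0$ the discriminant is the nodal curve $\mathcal{D}_{b_0}$ and the local model near each $q_i$ shows the restricted conic bundle is standard with the double cover ramified over the nodes, which is exactly $\widetilde{\mathcal{D}}_{b_0}\to\mathcal{D}_{b_0}$. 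The main obstacle, as indicated, is the coherent extension across the $q_i$ in families---getting the \'etale-local models of Proposition~\ref{prop.P} to glue to the global object---and it is precisely for this that the somewhat technical cohomology vanishing of Proposition~\ref{prop.coho} has been prepared.
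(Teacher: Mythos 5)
Your outline touches the right ingredients---root stacks, the Brauer class and a $\mu_2$-gerbe, the local models of Propositions \ref{prop.P} and \ref{prop.elementarytransformation} near the nodes, an elementary transformation at the end---but the crucial step is not carried out, and the argument you substitute for it does not work. You propose to build the conic bundle over $\mathcal{S}\setminus\{q_1,\dots,q_r\}$ and then ``glue'' it to the explicit local model near each $q_i$, asserting that the matching on overlaps is automatic because ``the extension \dots exists and is unique.'' There is no such uniqueness: a standard conic bundle is determined by its ramification data only up to birational equivalence (this is precisely the difficulty announced in Section \ref{subsect:family}), and concretely the smooth $\PP^1$-fibration over the root stack is the projectivization of a twisted rank $2$ sheaf lifting the Brauer class, which is far from unique (already for the trivial class, $\PP(\cO\oplus\cO)\not\cong\PP(\cO\oplus\cO(1))$). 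So the restriction of your global object to a punctured neighborhood of $q_i$ need not be isomorphic to the local model there, and nothing in your write-up produces the required isomorphism. You have also misidentified the role of Proposition \ref{prop.coho}: it is not a patching or descent statement over $\mathcal{S}$.

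The actual mechanism is a deformation-theoretic extension in the $B$-direction, not a gluing over $\mathcal{S}$. One first fixes a twisted rank $2$ sheaf $\mathcal{F}$ on a gerbe over the Matsuki--Olsson stack of the central fiber $(\mathcal{S}_{b_0},\mathcal{D}_{b_0})$, normalized by an elementary transformation (as in \cite{HKT}) so that the $H^2$ of its traceless endomorphisms vanishes. The family of root stacks $\sqrt{(\mathcal{S}_b,\mathcal{D}_b)}$ does not fit into a smooth family, so one blows up the $q_i$ in the total space and takes the iterated root stack $\mathcal{Y}'=\sqrt{(\mathcal{S}',\{\mathcal{D}',E\})}$; its fiber over $b_0$ is nonreduced with reduction $Y'\cup\bigcup_i X_i$, each $X_i$ a copy of the stack $X$ of Section \ref{sec.useful}. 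One glues the pullback of $\mathcal{F}$ with the bundle $R_{\rho_2}$ on each $X_i$ and then extends the resulting sheaf (with fixed determinant) across the successive infinitesimal thickenings $T_0\subset T_0'\subset T_0''\subset T_1\subset\cdots$ of the central fiber; the obstruction spaces are exactly $H^2(X,R_{\rho_3}\otimes M)^r$, $H^2(Y',(\mathcal{F}'^\vee\otimes\mathcal{F}')_0)$, $H^2(X,R_{\rho_3}\otimes\cO_X(-H'-H''))^r$, and so on, and this is where Proposition \ref{prop.coho} is actually used. Grothendieck existence and Artin approximation then algebraize, and only afterwards does one perform the elementary transformation and contraction of Proposition \ref{prop.elementarytransformation} to pass from $\mathcal{V}'\to\mathcal{S}'$ to $\mathcal{V}\to\mathcal{S}$, comparing with $V\to B\ell_0\A^3$ over a common \'etale neighborhood of $q_i$ and $0$. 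Without an argument of this kind your proof has a gap exactly at the point the theorem is meant to address. (A minor further slip: in your local model the coordinate cutting out $b_0$ must be $t_0$, not $u_0$ or $v_0$; the hyperplane $v_0=0$ meets the cone $u_0v_0=t_0^2$ in a nonreduced curve rather than a nodal one.)
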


\begin{rema}
\label{rem.MO}
The proof starts by replacing the fiber $\mathcal{S}_{b_0}$ by a root stack of the
sort defined by Matsuki and Olsson (cf.\ Section \ref{sec.rootconic}).
In the envisaged application, $\mathcal{D}_{b_0}$ consists of
two smooth curves meeting at the nodes $q_1$, $\dots$, $q_r$.
Then this root stack is just the iterated root stack of $\mathcal{S}_{b_0}$
along the two curves, and the subtle construction of Matsuki and Olsson,
based on logarithmic structures, is not needed.
\end{rema}

\begin{proof}
We follow the strategy of \cite{HKT}.
We construct, on the fiber over $c$, a suitable rank $2$ vector bundle over
a $\mu_2$-gerbe over a root stack of $\mathcal{S}_{b_0}$; this determines
$\mathcal{V}_{b_0}$.
We extend first the $\mu_2$-gerbe and then the vector bundle to the
whole family, allowing ourselves at each step to replace $(B,b_0)$ by
an \'etale neighborhood.

We denote the fiber $\mathcal{S}_{b_0}$ by $S$ and the fiber of the divisor
and cover $\widetilde{\mathcal{D}}_{b_0}\to \mathcal{D}_{b_0}$ by
$\widetilde{D}\to D$.
Since $D$ is a curve with nodes at $q_1$, $\dots$, $q_r$, the construction of
Matsuki and Olsson mentioned in Section \ref{sec.rootconic} yields a
smooth Deligne-Mumford stack $Y$ with flat morphism to $S$ that is an
isomorphism over $S\setminus D$.
Over smooth points, respectively over nodes of $D$ the stack $Y$ has stabilizer
$\mu_2$, respectively $\mu_2\times \mu_2$.
The cover $\widetilde{D}\to D$ determines a $2$-torsion element of
$\Br(S\setminus D)$, which is the restriction of a unique element of
$\Br(Y)$ (cf.\ \cite[Prop.\ 2 and Prop.\ 5]{HKT}).
There is, correspondingly, a $\mu_2$-gerbe
\[ Z\to Y, \]
which we may take over smooth points, respectively over nodes of $D$
to have stabilizer $\mu_2\times \mu_2$, respectively the dihedral group $\mathfrak D_4$,
with locally free sheaf $\mathcal{F}$ of rank $2$, such that the
projectivization of $\mathcal{F}$ is base-change to $Z$ of a smooth
$\PP^1$-fibration over $Y$, birational to a conic bundle
over $S$ with ramification data $\widetilde{D}\to D$.

By \cite[Prop.\ 17]{HKT},
after replacing $\mathcal{F}$ by a suitable locally free subsheaf
(obtained from $\mathcal{F}$ by elementary transformation over a
smooth divisor in $Y$, which may be taken in general position
with respect to the locus with nontrivial stabilizer),
we may suppose that the kernel of the trace homomorphism
\[
H^2(Y,(\mathcal{F}^\vee\otimes \mathcal{F})_0):=
\ker\big(H^2(Y,\mathcal{F}^\vee\otimes \mathcal{F})\to H^2(Y,\cO_Y)\big)
\]
vanishes; although $\mathcal{F}$ is a sheaf on the gerbe $Z$,
the locally free sheaf $\mathcal{F}^\vee\otimes \mathcal{F}$ descends to $Y$, and
this is meant by the above notation.
The space $H^2(Y,(\mathcal{F}^\vee\otimes \mathcal{F})_0)$ is the obstruction space for
the deformation theory of locally free coherent sheaves with
given determinant.

The Deligne-Mumford stack $Y$ does not sit in a smooth family with root stacks
$\sqrt{(\mathcal{S}_b,\mathcal{D}_b)}$.
After suitable modification, however, it sits in a flat family.

The singularities of $\mathcal{D}$ are resolved by blowing up:
\[ \mathcal{S}':=B\ell_{\{q_1,\dots,q_r\}}\mathcal{S}\qquad
\text{with smooth divisor}\qquad
\mathcal{D}':=B\ell_{\{q_1,\dots,q_r\}}\mathcal{D}. \]
Let $E=\bigcup_{i=1}^r E_i$ denote the exceptional divisor in
$\mathcal{S}'$.
The divisors $\mathcal{D}'$ and $E$ meet transversely, so the
iterated root stack
\[ \mathcal{Y}':=\sqrt{(\mathcal{S}',\{\mathcal{D}',E\})} \]
is smooth.
As a family over $B$, this has nonreduced fiber $\mathcal{Y}'_{b_0}$, but
\[ (\mathcal{Y}'_{b_0})_{\mathrm{red}}=Y' \cup \bigg(\bigcup_{i=1}^r X_i \bigg), \]
where $Y'$ denotes the blow-up of $Y$ at the points over $q_1$, $\dots$, $q_r$,
and each $X_i$ is a copy of the stack $X$
glued along the locus defined by the vanishing of the coordinate $t$,
in the notation of Section \ref{sec.useful}.
Let $Z'$ denote the corresponding blow-up of $Z$, so
$Z'\cong Y'\times_YZ$.
The coarse moduli space of $(\mathcal{Y}'_{b_0})_{\mathrm{red}}$ is
$\mathcal{S}'_{b_0}=S'\cup(\bigcup_{i=1}^r E_i)$, where $S'$ denotes the
blow-up of $S$ at $q_1$, $\dots$, $q_r$.

Over $X$ there is the $G$-torsor introduced in
Section \ref{sec.useful}, where $G$ is the algebraic group,
defined and exhibited as a $\mu_2$-extension of itself in
Section \ref{sec.representations}.
This way, we get a $\mu_2$-gerbe $U\to X$.
The substack in $X$ defined by $t=0$ is isomorphic to
$[\PP^1/(\Z/2\Z)]\times B\Z/2\Z$, and the
restriction of the $G$-torsor admits the description of
Example \ref{exa.secondZ2torsor}.
Consequently, the restriction of $U$ is isomorphic to
$[\PP^1/K]$,
where $K$ denotes the $\mu_2$-extension of the
copy of the Klein four-group in
$G$ generated by the $2\times 2$-permutation matrices and $\mu_2$,
acting through the Klein four-group.
Each exceptional divisor of $Z'$ admits the same description.
So, we may glue to obtain
\[ Z'\cup \bigg(\bigcup_{i=1}^r U_i \bigg), \]
a $\mu_2$-gerbe over $(\mathcal{Y}'_{b_0})_{\mathrm{red}}$, where each $U_i$ is a copy of $U$.
By applying the proper base change theorem for tame Deligne-Mumford stacks
\cite[App.\ A]{ACV} to the corresponding class
\[ \gamma\in H^2(\mathcal{Y}'_{b_0},\mu_2)=H^2((\mathcal{Y}'_{b_0})_{\mathrm{red}},\mu_2), \]
we obtain as in \cite{HKT} a class
\[ \Gamma\in H^2(\mathcal{Y}',\mu_2) \]
extending $\gamma$.
Accordingly there is a $\mu_2$-gerbe
\[ \mathcal{Z}'\to \mathcal{Y}' \]
with
\[ (\mathcal{Z}'_{b_0})_{\mathrm{red}}\cong Z'\cup\bigg(\bigcup_{i=1}^r U_i\bigg). \]

Corresponding to the representation $\rho_2$ of $G$ is a rank $2$ vector bundle
on $U$.
The restriction to the copy of $[\PP^1/K]$ over $t=0$ is determined by
the $2$-dimensional representation of $K\subset G$.
On the other hand, the restriction of $\mathcal{F}$ over a point $q_i$ is
associated with a linear representation of $\mathfrak D_4$, which after blowing up
becomes $[\PP^1/\mathfrak D_4]$ over the exceptional divisor,
in a manner that is compatible under $\mathfrak D_4\cong K$.
By identifying restrictions in this manner,
we obtain a rank $2$ vector bundle $\mathcal{F}'_0$ on $(\mathcal{Z}'_{b_0})_{\mathrm{red}}$ from
the pullback $\mathcal{F}'$ of $\mathcal{F}$ under $Z'\to Z$ and
the rank $2$ vector bundle on $U_i$ for every $i$.
Its determinant descends to a line bundle on
$(\mathcal{Y}'_{b_0})_{\mathrm{red}}$.

We argue that the determinant line bundle on $(\mathcal{Y}'_{b_0})_{\mathrm{red}}$
extends to a line bundle on $\mathcal{Y}'$,
after replacing $(B,b_0)$ by a suitable \'etale neighborhood.
It suffices to verify this after tensoring by the restriction of a line bundle
on $\mathcal{Y}'$.
Over $\mathcal{D}'$ there is a divisor in $\mathcal{Y}'$; we tensor by the
restriction of the associated line bundle.
This yields a line bundle on $(\mathcal{Y}'_{b_0})_{\mathrm{red}}$
with trivial stabilizer actions.
By \cite[Thm.\ 10.3]{alper}, this is isomorphic to the pullback of a
line bundle on the coarse moduli space $\mathcal{S}'_{b_0}$,
It thus suffices to show that every line bundle on
$\mathcal{S}'_{b_0}$ extends
(after replacing $(B,B_0)$ by a suitable \'etale neighborhood)
to a line bundle on $\mathcal{S}'$.
Tensoring by line bundles associated with the $E_i$ with appropriate multiplicities,
we are reduced to considering line bundles on $\mathcal{S}'_{b_0}$
pulled back from $S$, and thus to the corresponding fact for
line bundles on $\mathcal{S}$ and $S$, which is standard since
$\mathcal{S}$ is a smooth projective family of rational surfaces.

Now we show that the obstructions to extending $\mathcal{F}'_0$ to a vector bundle
on $\mathcal{Z}'$ vanish.
We define
\[ (\mathcal{Z}'_p)_{\mathrm{red}}=T_0\subset T'_0\subset T''_0\subset T_1\subset T'_1\subset \dots \]
to be the substacks of
$\mathcal{Z}'$ corresponding to the following effective divisors:
{\small
\[
\begin{array}{c|l|l}
&\text{divisor}&\text{obstruction space} \\ \hline
T_i&(2i+1)(X_1+\dots+X_r)+(i+1)Y'&H^2(X,R_{\rho_3}\otimes M)^r\\
T'_i&(2i+2)(X_1+\dots+X_r)+(i+1)Y'&H^2(Y',(\mathcal{F}'^\vee\otimes\mathcal{F}')_0)\\
T''_i&(2i+2)(X_1+\dots+X_r)+(i+2)Y'&H^2(X,R_{\rho_3}\otimes \cO_X(-H'-H''))^r
\end{array}
\]
}
In each case we have identified the obstruction space to extension of
locally free sheaf (on the gerbe) with fixed determinant.
The vanishing in each cases follows from
Proposition \ref{prop.coho} (cohomology on $X$) or the vanishing of
$H^2(Y,(\mathcal{F}^\vee\otimes \mathcal{F})_0)$.
So, starting with $\mathcal{F}'_0$ on the gerbe over $T_0$, we may extend
to successive infinitesimal neighborhoods and, by appealing
as in \cite{HKT} to the Grothendieck existence theorem
for tame Deligne-Mumford stacks \cite[Thm.\ A.1.1]{AV}, to a
locally free sheaf $\widetilde{\mathcal{F}}'$ on $\mathcal{Z}'$ extending $\mathcal{F}'_0$.

Associated with $\widetilde{\mathcal{F}}'$ is a
smooth $\PP^1$-fibration
\[ \mathcal{P}' \to \mathcal{Y}'. \]
The conic bundle construction of Section \ref{sec.rootconic} yields
a standard conic bundle
\[ \mathcal{V}'\to \mathcal{S}'. \]

We claim that with an elementary transformation and contraction, from
$\mathcal{V}'\to \mathcal{S}'$ we obtain $\mathcal{V}\to \mathcal{S}$,
fulfilling the requirements of the theorem.
We have already seen this in Proposition \ref{prop.elementarytransformation}
for the conic bundle $V\to B\ell_0\A^3$.
So, to complete the proof, it is enough to show for every $i$ that
$q_i\in \mathcal{S}$ and $0\in \A^3$ have a common \'etale neighborhood,
over which $\mathcal{V}'\to \mathcal{S}'$ and
$V\to B\ell_0\A^3$ are isomorphic.

Since $\mathcal{D}$ has an ordinary double point at $q_i$,
there is a diagram of \'etale morphisms of pointed schemes
\[
\xymatrix@C=12pt{
&(\mathcal{R},p)\ar[dl] \ar[dr] \\
(\mathcal{S},q_i) && (\A^3,0)
}
\]
such that $\mathcal{D}\subset \mathcal{S}$ and the divisor in
$\A^3$ defined by $u_0v_0-t_0^2=0$ have the same pre-image in $\mathcal{R}$.
Without loss of generality, $p$ is the only point of
$\mathcal{R}$ mapping to $q_i$ in $\mathcal{S}$, and is
the only point of $\mathcal{R}$ mapping to $0$ in $\A^3$.
As well, we may suppose that the image of the left-hand morphism
avoids $q_j$ for all $j\ne i$.
After blowing up, we obtain an analogous diagram with \'etale
morphisms
\[
\xymatrix@C=12pt{
&B\ell_p\mathcal{R}\ar[dl] \ar[dr] \\
\mathcal{S}' && B\ell_0\A^3
}
\]
There is a further diagram, in which each scheme is replaced by the
iterated root stack along the proper transform of the given divisor and the
exceptional divisor.

The class $\Gamma\in H^2(\mathcal{Y}',\mu_2)$ pulls back to a
class in $H^2$ of the iterated root stack of
$B\ell_p\mathcal{R}$.
As well, the $G$-torsor over $X$, pulled back to $M$ (Section \ref{sec.useful}),
pulls back to a class in the same $H^2$ group.
Since $\Gamma$ extends $\gamma$, whose construction also is based on the
$G$-torsor over $X$ of Section \ref{sec.useful},
we may conclude by applying the proper base change theorem to the
morphism to $\mathcal{R}$, that after possibly replacing
$(\mathcal{R},p)$ by an \'etale neighborhood, these two
$H^2$-classes coincide.
So, we obtain another analogous diagram of $\mu_2$-gerbes over
iterated root stacks.

Now we compare the vector bundles, obtained by pullback from
$\widetilde{\mathcal{F}}'$ on $\mathcal{Z}'$ and
the rank $2$ vector bundle on the $\mu_2$-gerbe over $M$, again coming from
the $G$-torsor over $M$ in Section \ref{sec.useful}.
Their restrictions to the copy of $U$ over $p$ are, again by construction,
isomorphic.
By an argument, analogous to above obstruction analysis but this time
using the vanishing of $H^1$, we see that an isomorphism over the copy of $U$
may be extended to all infinitesimal neighborhoods.
So, by the standard combination of the Grothendieck existence theorem
and Artin approximation, after replacing $(\mathcal{R},p)$ by an \'etale neighborhood
there is an isomorphism of the two vector bundles
over the gerbe over the iterated root stack of $B\ell_p\mathcal{R}$.
Consequently, the associated smooth $\PP^1$-fibrations are
isomorphic.
But the conic bundle construction of Section \ref{sec.rootconic}
commutes with \'etale base change of the underlying algebraic variety.
So the conic bundles
$\mathcal{V}'\times_{\mathcal{S}'}B\ell_p\mathcal{R}$
and $V\times_{B\ell_0\A^3}B\ell_p\mathcal{R}$ are isomorphic
over $B\ell_p\mathcal{R}$, as required.
\end{proof}

\section{Proof of Theorem~\ref{theo:headline}}
\label{sect:proof}
Let $\cS \ra B$ be as in Section \ref{subsect:surfaces}.
If $\cD\ra B$ is chosen in a general manner, then
$\cD$ will have smooth total space.
Note that after replacing $B$ by simply branched cover,
$\cD$ will satisfy the hypotheses in Theorem~\ref{thm.defo}.
So, we may apply Theorem~\ref{thm.defo} to obtain a family
$\cV \ra \cS \ra B$.
The composite $\cV \ra B$ is smooth, and
$\cV_{b_0}$ is stably rational; it remains to show that the very general fiber is not
stably rational.

We recall the main result of \cite{HKT}:
\begin{theo} 
\label{theo:conic}
Let $\mathcal L$ be a linear system of effective divisors on a smooth projective rational surface $S$,
with smooth and irreducible
general member.
Let $\cM$ be an irreducible component of the moduli space of pairs $(D, \tilde{D}\ra D)$, where $D\in \mathcal L$ is nodal and reduced
and $\tilde{D}\ra D$ is an \'etale cover of degree 2.
Assume that $\mathcal M$ contains a cover that is nontrivial over every irreducible component of a reducible curve
with smooth irreducible components.  
Then a conic bundle over $S$ corresponding to a very general point of $\mathcal M$ is not stably rational.
\end{theo}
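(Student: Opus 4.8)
The plan is to realize the conic bundles parametrized by $\cM$ in a smooth projective family, to isolate one fiber whose stable irrationality is detected by unramified cohomology, and to propagate this to the very general fiber by the specialization method. By hypothesis $\cM$ contains a point $m_0=[(\widetilde D_0\ra D_0)]$ with $D_0=C_1\cup\dots\cup C_k\in\mathcal L$ a reduced, reducible, nodal curve whose components $C_j$ are smooth and over each of which $\widetilde D_0\ra D_0$ is nontrivial. I would choose a pointed curve $(B,b_0)$ with a map $B\ra\cM$ taking $b_0$ to $m_0$ and the general point of $B$ to a point with smooth irreducible discriminant (this locus being dense in $\cM$ in the situation at hand). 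Pulling back the tautological divisor and degree $2$ cover over the constant family $\cS:=S\times B$ and, exactly as in Section~\ref{sect:proof}, replacing $B$ by a simply branched cover so that the total space of the cover is smooth and the discriminant acquires only ordinary double points in the fiber over $b_0$, the hypotheses of Theorem~\ref{thm.defo} hold; applying it and shrinking $B$ to an \'etale neighborhood of $b_0$ produces a smooth projective morphism $\psi\colon\cV\ra B$ with a conic bundle $\cV\ra\cS$, whose fiber $\cV_b$ is a standard conic bundle over $S$ with ramification data $\widetilde{\mathcal D}_b\ra\mathcal D_b$. In particular $\cV$ is smooth and $V_0:=\cV_{b_0}$ is a smooth standard conic bundle over the rational surface $S$ with reducible discriminant $D_0$ and cover $\widetilde D_0\ra D_0$ nontrivial over every component.

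The essential point is that $V_0$ is not retract rational, because $\Br_{\mathrm{nr}}(\bC(V_0)/\bC)\neq 0$. This is an Artin--Mumford-type computation \cite{artinmumford}: for a standard conic bundle over a rational surface, $H^2_{\mathrm{nr}}(\bC(V_0),\bZ/2\bZ)$ is computed from a residue complex on $S$, whose terms coming from $S$ itself vanish because $S$ is rational, while the reducibility of $D_0$ together with the nontriviality of $\widetilde D_0\ra D_0$ over each component $C_j$ --- the compatibility at the nodes of $D_0$ holding by construction of $\cM$ --- produces a nonzero class supported on the $C_j$. Since a nonzero unramified Brauer class obstructs universal $\CH_0$-triviality (and hence retract rationality and stable rationality), the smooth projective variety $V_0$ is not universally $\CH_0$-trivial.

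Next I would invoke the specialization theorem for stable rationality \cite{voisin}, \cite{CT-Pirutka}, \cite{totaro}: for the smooth projective family $\psi\colon\cV\ra B$ over the smooth curve $B$, with $V_0$ smooth and not universally $\CH_0$-trivial, the fiber $\cV_b$ is not universally $\CH_0$-trivial --- hence not stably rational, in fact not retract rational --- for $b$ away from a countable union of proper closed subsets of $B$. Since $B$ was an arbitrary curve through $m_0$ generically meeting the smooth-discriminant locus, and $\cM$ is irreducible, the standard countability argument (equivalently, the same specialization theorem applied to the evident extension of Theorem~\ref{thm.defo} over an \'etale neighborhood of $m_0$ in $\cM$) upgrades this to: the conic bundle over the very general point of $\cM$ is not universally $\CH_0$-trivial, hence not stably rational.

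The step I expect to be the main obstacle is the stable irrationality of the degenerate fiber $V_0$: selecting a suitable reducible discriminant inside $\cM$ and carrying out the Artin--Mumford computation that certifies $\Br_{\mathrm{nr}}(\bC(V_0))\neq 0$ from the nontriviality of the cover over each component. The other delicate ingredient, the coherent construction of the conic bundles in a smooth family, is precisely what Theorem~\ref{thm.defo} supplies; the specialization step and the countability bookkeeping are by now routine.
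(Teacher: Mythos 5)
The paper does not actually prove this statement: it is quoted as the main theorem of \cite{HKT}, so the relevant comparison is with the argument given there. Your high-level strategy --- degenerate to the reducible discriminant, detect failure of stable rationality on the special fiber by an unramified Brauer class, propagate by specialization --- is indeed the strategy of \cite{HKT}. But there is a genuine gap in the middle step. You invoke Theorem~\ref{thm.defo} to realize the special fiber $V_0$ as a \emph{smooth standard} conic bundle over $S$ with discriminant $D_0=C_1\cup\dots\cup C_k$, and then assert $\Br_{\mathrm{nr}}(\bC(V_0)/\bC)\neq 0$. These two claims are incompatible. For a smooth standard conic bundle over a rational surface, $H^3(V_0,\bZ)$ is torsion free (it is computed from the Prym data of the discriminant cover), hence $\Br(V_0)=\Br_{\mathrm{nr}}(\bC(V_0)/\bC)=0$; equivalently, in the residue computation of Artin--Mumford and Colliot-Th\'el\`ene--Ojanguren, a standard model forces the residue of the quaternion class along each $C_j$ to be a cover \emph{ramified} at the nodes, and then no nontrivial unramified class survives. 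The class you want exists precisely because the covers parametrized by $\cM$ are \emph{\'etale} over the nodal curve $D_0$, and in that case the associated conic bundle necessarily acquires ordinary double points in its total space over the nodes of $D_0$. Theorem~\ref{thm.defo} does not apply to this degeneration: its hypotheses require $\widetilde{\cD}\to\cD$ to be ramified over the nodes $q_i$ with $\widetilde{\cD}$ smooth (the admissible-cover limit of \'etale covers of smooth curves), and its special fiber carries the ramified-at-nodes data, not the point $m_0\in\cM$. Indeed, in the paper that theorem is used to produce the \emph{stably rational} special fiber; you have conflated the two different boundary degenerations.

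The proof in \cite{HKT} therefore works with the \emph{singular} conic bundle $V_0$ attached to $m_0$, with ordinary double points over the nodes of $D_0$; its nontrivial unramified Brauer class is certified essentially as you describe. One then checks that blowing up the ordinary double points yields a universally $\CH_0$-trivial resolution (the exceptional divisors are smooth quadric surfaces), and applies the Colliot-Th\'el\`ene--Pirutka refinement of Voisin's specialization method, which allows singular special fibers admitting such resolutions. The smooth-family version of specialization that you rely on is not sufficient here, and no smooth standard substitute for $V_0$ carrying the Brauer obstruction exists. To repair the argument, replace the appeal to Theorem~\ref{thm.defo} at $m_0$ by the explicit singular model together with its $\CH_0$-trivial resolution, and upgrade the specialization input accordingly; the rest of your outline (choice of the curve $B$ through $m_0$, irreducibility of $\cM$, and the countability bookkeeping) is fine.
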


We seek to apply this to standard conic bundles
$V\ra S$ over a quintic del Pezzo surface with degeneracy locus a 
generic bi-anticanonical divisor, as in Section~\ref{subsect:family}. 
The relevant reducible curve is a union of two generic anticanonical divisors, i.e.,
elliptic curves which admit non-trivial \'etale double covers.
The birational class of $V$ depends on the choice of a non-trivial \'etale double cover
$\tilde{D} \ra D$. We can invoke Theorem~\ref{theo:conic} once we know that the monodromy
action on such double covers is transitive. 

We apply the criterion of \cite[Thm.~3]{beaumonodromy}; see also \cite[Sect.~3]{HKT-moduli}.
We need to find members $D$, $D'\in |-2K_S|$ such that
\begin{itemize}
\item $D$ has an $E_6$ singularity;
\item $D'$ is a union of a pair of smooth curves meeting transversally in an odd number of points.
\end{itemize}
The second is obvious --- again use a pair of anticanonical curves. 
An $E_6$ singularity is locally analytically equivalent to $y^3=x^4$.
The plane sextic
\begin{align*}
\frac{1}{2}y^4z^2+y^3z^3
+\frac{1}{2}xy^4z+xy^3z^2
-{}&x^2y^3z-3x^2y^2z^2
-x^3y^3-2x^3y^2z+{}\\
&\frac{1}{2}x^3yz^2+3x^4y^2-\frac{1}{2}x^4yz+x^4z^2=0
\end{align*}
has $E_6$-singularity at $(0:0:1)$ and nodes at
four other general points, hence defines such a curve $D$.

\section{Limitations of this construction}
\label{sect:limit}

The stable rationality of smooth cubic threefolds is quite mysterious.
No stably rational examples are known but the known arguments
for disproving stable rationality fail in this case. Voisin
\cite{VoisinJEMS} has shown
that the existence of a decomposition of the diagonal for 
a cubic threefold $X$ reduces to
finding curves of `odd degree' in its intermediate Jacobian 
$\IIJ(X)$. Such curves
arise in many examples, which are therefore natural candidates for stable 
rationality.  

We would have liked to use the approach of Section~\ref{sect:geom} 
to exhibit a stably rational cubic threefold.
Following  the 
procedure of Section~\ref{subsect:embed}, for
a nonsingular plane cubic $C\subset \bP^2$ and point $p:=s_4\in \bP^2 \setminus C$,
we would need to have $p\in L$.
Yet $L$ must be the satellite conic, which 
contradicts Lemma~\ref{lemm:satellite}.

\bibliographystyle{plain}
\bibliography{stabratdeform}

\end{document}